\newtheorem{theorem}{Theorem}
\newtheorem{lemma}{Lemma}
\newtheorem{prop}{Proposition}
\newtheorem{corollary}{Corollary}
\theoremstyle{definition}
\newtheorem{example}{Example}
\theoremstyle{remark}
\numberwithin{equation}{section}
\begin{document}
\title{ Centers of Leavitt path algebras and their completions }

%

\author{Adel Alahmadi}
\address{Department of Mathematics, Faculty of Science, King Abdulaziz University, P.O.Box 80203, Jeddah, 21589, Saudi Arabia}
\email{analahmadi@kau.edu.sa}
\author{Hamed Alsulami}
\address{Department of Mathematics, Faculty of Science, King Abdulaziz University, P.O.Box 80203, Jeddah, 21589, Saudi Arabia}
\email{hhaalsalmi@kau.edu.sa}
%
%

\keywords{associative algebra, Leavitt path algebra, topological algebra}

\maketitle

\begin{abstract}
 In [8, 9] M. G. Corrales Garcia, D. M. Barquero, C. Martin Gonzalez,
M. Siles Molina, J. F Solanilla Hernandez described the center of a Leavitt
path algebra and characterized it in terms of the underlying graph. We offer
a different characterization of the center. In particular, we prove that the
Boolean algebra of central idempotents \ of a Leavitt path algebra of a
finite graph is isomorphic to the Boolean algebra of finitary annihilator
hereditary subsets of the graph.
\end{abstract}

\maketitle

\section{Definitions and The Main Results}

A (directed) graph $\Gamma =(V,E,s,r)$ consists of two sets $V$ and $E$ that
are respectively called vertices and edges, and two maps $s,r:E\rightarrow V.$ The vertices $s(e),$ $r(e)$ are referred to as the source and the range of
the edge $e$ respectively. The graph is called row-finite if for all vertices $v\in V$, $card(s^{-1}(v))<\infty $. A vertex $v$ for which $%
s^{-1}(v)=\phi $ is called a sink.

A path $p=e_{1}\cdots e_{n}$ of length $l(p)=n$ in a graph $\Gamma $ is a
sequence of edges $e_{1}$,\ldots ,$e_{n}$ such that $r(e_{i})=s(e_{i+1})$
for $i=1, 2, \ldots , n-1$. In this case we say that $s(p)=s(e_{1}),r(p)=r(e_{n}).$

If $W\subset V$ and $r(p)\in W$ then the path $p$ is said to be a path from the vertex $s(p)$ to the subset $W$.

\medskip

Let $Path(\Gamma)$ denote the set of all paths in the graph $\Gamma $.

\medskip

If $s(p)=r(p)$ then the path $p$ is closed. If $p=e_{1}\cdots e_{n}$ is
closed path and the vertices $s(e_{1}), \ldots ,   s(e_{n})$ are distinct
then we call the path $p$ a cycle. Denote $V(p)=\{s(e_{1})$, \ldots , $s(e_{n})\}$, $E(p)=\{e_{1}$,\ldots ,$e_{n}\}$.

An edge $e\in E$ is called an exit of a cycle $C$ if $s(e)\in V(C)$, but $e\notin E(C)$. A cycle that has no exits is called a $NE$-$cycle$.

\medskip

We say that a vertex $w$ is a descendant of a vertex $v$ if there exists a
path $p\in Path(\Gamma )$ such that $s(p)=v$, $r(p)=w$.

\medskip

A subset $W\subset V$ is called hereditary if for an arbitrary vertex $w\in W
$ all descendants of $w$ lie in $W$ (see [1, 2]). An empty subset of $V$ is
viewed as hereditary.

\medskip

Let $C$ be a cycle and let $W$ be a hereditary subset of $V$. If $V(C)\cap
W=\phi $ and all vertices in $V(C)$ have descendants in $W$ then we write $%
C\Rightarrow W$.

\medskip

Finally, for two nonempty subsets $X$, $Y\subset V$ let $E(X$, $Y)$

(respectively $Path(X$, $Y)$) denote the set of edges (respectively paths)
with the source in $X$ and the range in $Y$.

\bigskip

Let $\Gamma $ be a row-finite graph and let $F$ be a field. The Leavitt path
$F$-algebra $L(\Gamma )$ is the $F$-algebra presented by the set of
generators $\{v$ $|$ $v\in V\}$, $\{e$, $e^{\ast }|$ $e\in E\}$ and the set
of relations (1) $vw=\delta _{v,w}v$ for all vertices $v$, $w\in V$; (2) $%
s(e)e=er(e)=e$, $r(e)e^{\ast }=e^{\ast }s(e)=e^{\ast }$ for all $e\in E$;
(3) $e^{\ast }f=\delta_{e\text{,}f}r(e)$ for all $e$, $f\in E$; (4) $v=\underset{%
s(e)=v}{\sum }ee^{\ast }$, for an arbitrary vertex $v$, which is not a sink
(see [2, 6]).

\medskip

The mapping, which sends $v$ to $v$ for $v\in V$, $e$ to $e^{\ast }$ and $%
e^{\ast }$ to $e$ for $e\in E$, extends to an involution of the algebra $%
L(\Gamma )$. If $p=e_{1}\cdots e_{n}$ is a path, then $p^{\ast }=e_{n}^{\ast
}\cdots e_{1}^{\ast }$. In [7] A. Aranda Pino and K. Crow proved that the
center of a simple Leavitt path algebra is $(0)$ if the graph is infinite
and $F\cdot 1$ if the graph is finite.

\medskip

In [8] M. G. Corrales Garcia, D. M. Barquero, C Martin Gonzalez, M. Siles
Molina, J. F Solanilla Hernandez showed that for 
a finite graph the center is isomorphic to a finite direct sum $F[t^{-1}$, $t]\oplus \cdots
\oplus F[t^{-1}$, $t]\oplus F\oplus \cdots \oplus F$ of the Laurent
polynomial algebra $F[t^{-1}$, $t]$ and the field $F$.

\medskip

In [9] the same authors found a highly nontrivial description of the center
in terms of extreme cycles of the graph.

\bigskip
\textbf{Important Remark:} Everywhere except in Section 4 we assume that the underlying graph $\Gamma $ is finite.

\medskip

Let $W$ be a nonempty subset of $V$. Consider the subset

\qquad \qquad \qquad $W^{\perp }=\{v\in V | Path(\{v\} , W)=\phi \}$.

\medskip

For the empty subset we let $\phi ^{\perp }=V$. It is easy to see that $%
W^{\perp }$ is a hereditary subset of $V$. If $W_{1}\subseteq W_{2}\subseteq
V$ then $W_{1}^{\perp }\supseteq W_{2}^{\perp }$, $(W^{\perp })^{\perp
}\supseteq W$, $W^{\perp }=((W^{\perp })^{\perp })^{\perp }$.

\medskip

We will refer to subsets $W^{\perp }$, $W\subseteq V$, as annihilator
hereditary subsets.

\medskip

Lemma 16 of [4] asserts that $(W^{\perp })^{\perp }$ is the largest
hereditary subset of $V$ such that from every vertex of $(W^{\perp })^{\perp
}$ there exists a path to $W$.

\medskip

Recall that a Boolean algebra is a set with two operations $\wedge $ and $%
\daleth $, which satisfy a certain list of axioms (see [11]).

\begin{example}\label{ex1}
 Let $X$ be a set. The set of  all subsets of $X$ is a Boolean algebra with respect to
the operations of intersection and complementation.
\end{example}

\begin{example}\label{ex2}
Let $A$ be a unital associative algebra. The set $E(A)$ of all central
idempotents is a Boolean algebra with respect to the operations $e_{1}\wedge
e_{2}=e_{1}e_{2}, \daleth e=1-e$.
\end{example}

\begin{lemma}\label{lem1}
The set $B(\Gamma )$ of all annihilator hereditary subsets of $V$ is a
Boolean algebra with respect to the operations $W_{1}\cap W_{2}$, $\daleth
W=W^{\perp }$.
\end{lemma}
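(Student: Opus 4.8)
The plan is to check that $(B(\Gamma),\cap,{}^{\perp})$ satisfies the axioms of [11] by exhibiting it as a complemented distributive lattice in which the meet is $\cap$ and the complementation is $W\mapsto W^{\perp}$. First I would confirm that both operations keep us inside $B(\Gamma)$. For complementation this is clear, since $W^{\perp}$ already has the form $X^{\perp}$; moreover, for $S=W^{\perp}\in B(\Gamma)$ the identity $W^{\perp}=((W^{\perp})^{\perp})^{\perp}$ gives $(S^{\perp})^{\perp}=S$, so ${}^{\perp}$ is an involution on $B(\Gamma)$. For intersection I would record the de Morgan identity
\[
(W_{1}\cup W_{2})^{\perp}=W_{1}^{\perp}\cap W_{2}^{\perp},
\]
which is immediate from the definition of ${}^{\perp}$, because a vertex admits no path to $W_{1}\cup W_{2}$ exactly when it admits none to $W_{1}$ and none to $W_{2}$. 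Writing $S_{i}=W_{i}^{\perp}$, this presents $S_{1}\cap S_{2}$ as $(W_{1}\cup W_{2})^{\perp}$, so $B(\Gamma)$ is closed under $\cap$.

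Next I would assemble the bounded lattice. Taking $\cap$ as meet and using that ${}^{\perp}$ is order-reversing ($W_{1}\subseteq W_{2}$ forces $W_{1}^{\perp}\supseteq W_{2}^{\perp}$), the formula $S_{1}\vee S_{2}:=(S_{1}^{\perp}\cap S_{2}^{\perp})^{\perp}$ defines the least upper bound of $S_{1},S_{2}$ inside $B(\Gamma)$, and the two de Morgan laws relating $\cap$, $\vee$ and ${}^{\perp}$ then hold automatically. The greatest and least elements are $1=\phi^{\perp}=V$ and $0=V^{\perp}$ (the latter since $V^{\perp}\subseteq W^{\perp}$ for every $W$), and ${}^{\perp}$ interchanges them. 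The complement laws I would read off directly: by the de Morgan identity $S\cap S^{\perp}=(S\cup S^{\perp})^{\perp}$, and since a vertex lying outside $S^{\perp}$ is, by the very definition of $S^{\perp}$, one that has a path to $S$, every vertex admits a path into $S\cup S^{\perp}$; this forces $(S\cup S^{\perp})^{\perp}$ to be the least element $0=V^{\perp}$, and applying ${}^{\perp}$ yields $S\vee S^{\perp}=1$. At this stage $B(\Gamma)$ is a bounded, complemented lattice carrying the order-reversing involution ${}^{\perp}$.

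The remaining, and genuinely substantial, point is distributivity, and this is where I expect the main difficulty. The cleanest route is to prove that the lattice join of two members of $B(\Gamma)$ coincides with their set-theoretic union, that is, that $S_{1}\cup S_{2}$ is again annihilator hereditary whenever $S_{1},S_{2}$ are. Granting this, $B(\Gamma)$ is a sublattice of $(\mathcal{P}(V),\cap,\cup)$ and is therefore automatically distributive; being also complemented by the previous paragraph, it is a Boolean algebra with $\wedge=\cap$ and $\daleth={}^{\perp}$, as required. To establish that $S_{1}\cup S_{2}$ is closed, equivalently $(S_{1}\cup S_{2})^{\perp\perp}=S_{1}\cup S_{2}$, I would lean on Lemma 16 of [4], which identifies $(W^{\perp})^{\perp}$ as the largest hereditary subset of $V$ from each of whose vertices there is a path to $W$, and show that this closure operation adds no new vertex to a union of two closed sets. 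Carrying out this last reachability analysis on the graph is, I expect, the crux of the argument.
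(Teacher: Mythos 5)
Your closure arguments and your verification of the complement laws are fine, and up to that point you are doing essentially what the paper does (the paper only checks closure under $\cap$ and $\daleth W=W^{\perp}$ and declares the remaining axioms straightforward, pointing ahead to Proposition 2 of Section 2). The genuine gap is exactly at the step you yourself identify as the crux: your route to distributivity rests on the claim that $S_{1}\cup S_{2}$ is again annihilator hereditary whenever $S_{1},S_{2}$ are, and that claim is false. Take the graph with vertices $u,a,b$, edges $u\to a$ and $u\to b$, with $a,b$ sinks. Then $\{a\}=\{b\}^{\perp}$ and $\{b\}=\{a\}^{\perp}$ lie in $B(\Gamma)$, but $\{a,b\}$ does not: every vertex has a path to $\{a,b\}$, so $\{a,b\}^{\perp}=\phi$ and $(\{a,b\}^{\perp})^{\perp}=V\neq\{a,b\}$. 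Hence the join of $\{a\}$ and $\{b\}$ in $B(\Gamma)$ is $V$, not their union, and $B(\Gamma)$ is not a sublattice of the power set of $V$; in general the join is the possibly strictly larger set $((S_{1}\cup S_{2})^{\perp})^{\perp}$. (The same example shows that the definition must be read with $W$ hereditary, equivalently $S=(S^{\perp})^{\perp}$: the set $\{a,b\}$ does equal $\{u\}^{\perp}$ for the non-hereditary set $\{u\}$, and if it were admitted into $B(\Gamma)$ then $\daleth$ would fail to be an involution.)

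So distributivity needs a different argument, and "the reachability analysis" you defer cannot succeed in the form you propose. The paper gets distributivity from its structure results: Lemma 9 shows that $B(\Gamma)$ consists precisely of the $2^{k}$ sets $((W_{i_{1}}\cup\cdots\cup W_{i_{s}})^{\perp})^{\perp}$ indexed by subsets of the family of minimal hereditary subsets $W_{1},\ldots,W_{k}$, this indexing is compatible with $\cap$ and with $\perp$ (which corresponds to complementing the index set), and therefore $B(\Gamma)$ is isomorphic to the power set Boolean algebra of $\{1,\ldots,k\}$; equivalently, Proposition 2 identifies $B(\Gamma)$ with the Boolean algebra of central idempotents of $\widehat{L}(\Gamma)$. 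A self-contained proof should establish that parametrization (or verify distributivity for the join $((S_{1}\cup S_{2})^{\perp})^{\perp}$ directly); the union-closure shortcut does not work.
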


\begin{proof}
If $W_{1}$, $W_{2}$ are annihilator hereditary subsets of $V$ then $ W_{1}=(W_{1}^{\perp })^{\perp }, W_{2}=(W_{2}^{\perp })^{\perp }$ and
$W_{1}\cap W_{2}=(W_{1}^{\perp }\cup W_{2}^{\perp })^{\perp }$. Hence $%
B(\Gamma )$ is closed with respect to intersection and the operation $%
\daleth W=W^{\perp }$. Verification of the axioms of a Boolean algebra is
straightforward (it will also follow from the Proposition 2 of \S 2).
\end{proof}

\bigskip

Let $W$ be a nonempty subset of $V$. A path $p=e_{1}\cdots e_{n}\in
Path(\Gamma )$, $e_{i}\in E$, is said to be an arrival path in $W$ if $%
r(p)\in W$, but none of the vertices $s(e_{1})$, \ldots , $s(e_{n})$ lies in
$W$. A vertex $w\in W$ is viewed as an arrival path in $W$ of length $0$.
Let $Arr(W)$ denote the set of all arrival paths in $W$. We let $Arr(\phi
)=\phi $.

\medskip

We call a hereditary set $W$ finitary if $|Arr(W)|$ $<\infty $. The empty
subset is viewed as finitary as well.

\bigskip

\begin{lemma}\label{lem2}
Finitary annihilator hereditary subsets form a Boolean subalgebra of $%
B(\Gamma )$.
\end{lemma}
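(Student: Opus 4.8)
The plan is to lean on Lemma~\ref{lem1}, which already equips $B(\Gamma)$ with the structure of a Boolean algebra having meet $W_1\cap W_2$, complement $\daleth W=W^{\perp}$, top element $V=\phi^{\perp}$ and bottom element $\phi=V^{\perp}$. Since join is recovered as $W_1\vee W_2=(W_1^{\perp}\cap W_2^{\perp})^{\perp}$, to exhibit the finitary members as a Boolean subalgebra it suffices to check four things: that $\phi$ and $V$ are finitary, and that the class of finitary annihilator hereditary sets is closed under $\cap$ and under $\perp$. The first is immediate, since $\phi$ is finitary by convention while $Arr(V)=V$ is finite because $\Gamma$ is finite.

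The key step I would isolate is a graph-theoretic reformulation of finitariness: for a hereditary set $W$ one has $|Arr(W)|<\infty$ if and only if there is no cycle $C$ with $C\Rightarrow W$. The forward implication is the easy direction in contrapositive form: given such a cycle $C$, pick $v\in V(C)$ and an arrival path $q$ from $v$ to $W$; since $V(C)\cap W=\phi$, prepending $k$ loops around $C$ based at $v$ to $q$ produces, for every $k\ge 0$, a distinct arrival path in $W$, so $Arr(W)$ is infinite. For the converse I would use finiteness of $\Gamma$: if $Arr(W)$ is infinite there are arrival paths of length exceeding $|V|$, and any such path must repeat a vertex among its sources (the terminal range vertex, lying in $W$, cannot coincide with a source, which lies outside $W$); excising the closed segment between two occurrences yields a cycle $C$ with $V(C)\cap W=\phi$, and, because the path continues from $C$ into $W$, every vertex of $C$ has a descendant in $W$, giving $C\Rightarrow W$.

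With this characterization in hand the two closure statements become short arguments built on one observation: for a hereditary set $W$ and a cycle $C$, every vertex of $C$ is a descendant of every other, so heredity forces either $V(C)\subseteq W$ or $V(C)\cap W=\phi$. For the intersection, suppose toward a contradiction that $C\Rightarrow W_1\cap W_2$. If the dichotomy gives $V(C)\subseteq W_1$ and $V(C)\subseteq W_2$, then $V(C)\subseteq W_1\cap W_2$, contradicting $V(C)\cap(W_1\cap W_2)=\phi$; otherwise $V(C)\cap W_i=\phi$ for some $i$, and since every vertex of $C$ has a descendant in $W_1\cap W_2\subseteq W_i$ we obtain $C\Rightarrow W_i$, contradicting that $W_i$ is finitary. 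For the complement, suppose $C\Rightarrow W^{\perp}$. If $V(C)\subseteq W$, a vertex of $C$ would have a descendant in $W^{\perp}$ which, by heredity of $W$, also lies in $W$, contradicting $W\cap W^{\perp}=\phi$; if instead $V(C)\cap W=\phi$, then no vertex of $C$ lies in $W^{\perp}$, so each has a descendant in $W$ and thus $C\Rightarrow W$, again contradicting finitariness of $W$.

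I expect the main obstacle to be the converse direction of the characterization, where one must extract from a long arrival path a cycle that is simultaneously disjoint from $W$ and able to reach $W$; here the finiteness of $\Gamma$ together with the defining feature of arrival paths (all sources avoid $W$, only the range enters it) is exactly what makes the vertex-repetition argument deliver a cycle $C$ with $C\Rightarrow W$. Once this characterization and the cycle dichotomy for hereditary sets are recorded, the closure under $\cap$ and $\perp$ is essentially formal.
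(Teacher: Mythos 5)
Your proof is correct, but it is organized differently from the paper's. The paper handles the two closure properties by two unrelated arguments: for intersections it proves the direct containment $Arr(W_{1}\cap W_{2})\subseteq Arr(W_{1})\cup Arr(W_{2})$ (if $p=e_{1}\cdots e_{n}$ fails to be an arrival path in each $W_{i}$ separately, a source vertex of $p$ lands in $W_{1}$ and another in $W_{2}$, and heredity puts the later of the two in $W_{1}\cap W_{2}$, contradicting $p\in Arr(W_{1}\cap W_{2})$); for complements it argues exactly as you do, producing a cycle $C\Rightarrow W^{\perp}$ and deriving a contradiction. You instead isolate the characterization ``$W$ is non-finitary iff some cycle $C$ satisfies $C\Rightarrow W$'' as the single engine, prove both directions of it (the paper invokes the hard direction without proof when it asserts that non-finitariness of $W^{\perp}$ yields such a cycle), and then deduce both closure properties from it together with the dichotomy $V(C)\subseteq W$ or $V(C)\cap W=\phi$ for hereditary $W$. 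What your route buys is uniformity and an explicit justification of a step the paper leaves implicit; what the paper's intersection argument buys is a sharper, purely combinatorial bound $|Arr(W_{1}\cap W_{2})|\leq|Arr(W_{1})|+|Arr(W_{2})|$ that does not depend on finiteness of $\Gamma$, whereas your intersection argument does (through the converse direction of the characterization). One cosmetic caveat: when you extract a cycle from a long arrival path you should take a \emph{minimal} closed segment between repeated source vertices to guarantee the sources of the segment are distinct, as the definition of a cycle requires; your phrasing glosses over this, but the fix is standard.
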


\begin{proof}
Let the subsets $W_{1}$, $W_{2}\in B(\Gamma )$ be finitary. We need to show
that the subsets $W_{1}\cap W_{2}$, $W_{1}^{\perp }$ are finitary as well.
We claim that $Arr(W_{1}\cap W_{2})\subseteq Arr(W_{1})\cup Arr(W_{2})$.
Indeed, let $p\in Arr(W_{1}\cap W_{2})$. If $p=e_{1}\cdots e_{n}$, $e_{i}\in
E$, $p\notin Arr(W_{1})$ then one of the vertices $s(e_{1})$, \ldots , $%
s(e_{n})$ lies in $W_{1}$. Let $v_1=s(e_{i})\in W_{1}$, $1\leq i\leq n$.
Similarly, if $p\notin Arr(W_{2})$ then there exists a vertex $v_2=s(e_{j})\in
W_{2}$, $1\leq j\leq n$. If $j\geq i$ then $v_{2}\in W_{1}\cap W_{2}$. If $%
i\geq j$ then $v_{1}\in W_{1}\cap W_{2}$. In both cases we have got a
contradiction with $p$ being an arrival path in $W_{1}\cap W_{2}$.

\medskip

Now let $W$ be a finitary annihilator hereditary set. We will show that the
set $W^{\perp }$ is finitary. If this is not the case then there exists a
cycle $C$ such that $C\Rightarrow W^{\perp }$. Since the set $W$ is
hereditary and $W\cap W^{\perp }=\phi $ we conclude that $V(C)\cap W=\phi $.
Moreover, since the set $W$ is finitary it follows that $C\nRightarrow W$.
Hence all vertices from $V(C)$ lie in $W^{\perp }$, a contradiction. This
finishes the proof of the lemma.
\end{proof}

Let $W$ be a finitary hereditary subset of $V$. Consider the graph
$\Gamma (W)=(W$, $E(W$, $W))$. The subalgebra of $L(\Gamma )$ generated by $%
W $, $E(W$, $W)$, $E(W$, $W)^{\ast }$ is isomorphic to the Leavitt path algebra $L(\Gamma (W))$
(see [6, 10], see also the basis of $L(\Gamma )$ introduced in [5]). We
will denote this subalgebra of $L(\Gamma )$ as $L(W)$.

Consider the (diagonal) subalgebra $diagL(\Gamma )=\sum\limits_{v\in V} vL(\Gamma )v$. It is easy to see that the center of $L(\Gamma )$ lies in
$diagL(\Gamma )$. The mapping $\varphi _{W}:diagL(W)\rightarrow diagL(\Gamma) $, $a\mapsto\sum\limits_{p\in Arr(W)}pap^{\ast }$ is an embedding
of the subalgebra $diagL(W)$ in $diagL(\Gamma )$. We will call this
embedding diagonal. In the next section we will define a diagonal
embedding of $diagL(W)$ of a not necessarily finitary hereditary subset into
the topological completion $\widehat{L}(\Gamma )$ of $L(\Gamma )$. We will
also show that the diagonal embedding maps the center of $\widehat{L}(W)$ into the center of $\widehat{L}(\Gamma )$.

Denote $\varphi _{W}\left( \underset{w\in W}{\sum }w\right) =\underset{p\in
Arr(W)}{\sum }pp^{\ast }=e(W)$.

\bigskip

Now we are ready to formulate one of the main results of the paper.

\bigskip

\begin{theorem}\label{thm1}
Let $\Gamma$ be a finite graph. The mapping $W\rightarrow e(W)$ is the isomorphism from the Boolean
algebra of finitary annihilator hereditary subsets of $V$ to the Boolean
algebra of central idempotents of $L(\Gamma )$.
\end{theorem}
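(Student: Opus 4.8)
The plan is to verify four things: that $e(W)$ really is a central idempotent, that the map respects the two Boolean operations, that it is injective, and that it is surjective.

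First, idempotency and centrality. No arrival path is a proper initial segment of another: if $p\in Arr(W)$ were a prefix of $q\in Arr(W)$, then $r(p)\in W$ would be an interior source vertex of $q$, contradicting $q\in Arr(W)$. Hence $p^{\ast}q=\delta_{p,q}r(p)$ for $p,q\in Arr(W)$, which gives at once $e(W)^{2}=\sum_{p}p\,r(p)\,p^{\ast}=e(W)$. For centrality it suffices, since the involution $\ast$ fixes each $pp^{\ast}$, to check $e(W)e=e\,e(W)$ for every edge $e$ together with $e(W)v=v\,e(W)$; the vertex case is immediate from $p^{\ast}v=\delta_{s(p),v}p^{\ast}$. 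For an edge I would split into $s(e)\in W$, then $s(e)\notin W$ with $r(e)\in W$, then $s(e),r(e)\notin W$, using hereditariness and the prefix remark to rewrite both products. The essential point is the bijection $p'\leftrightarrow e p'$ between $\{p'\in Arr(W):s(p')=r(e)\}$ and the arrival paths beginning with $e$, which makes $e(W)e$ and $e\,e(W)$ coincide. This computation is routine but is the technical backbone.

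Second, the complement law $e(W^{\perp})=1-e(W)$. Orthogonality $e(W)e(W^{\perp})=0$ holds because for $p\in Arr(W)$, $q\in Arr(W^{\perp})$ the product $p^{\ast}q$ survives only if one path is a prefix of the other, and either case forces, via hereditariness of $W$ and of $W^{\perp}$ and $W\cap W^{\perp}=\phi$, a vertex in $W\cap W^{\perp}$. Completeness $e(W)+e(W^{\perp})=\sum_{v}v$ I would prove vertex by vertex: with $f(v)=v\,e(W)+v\,e(W^{\perp})$, the cases $v\in W$ and $v\in W^{\perp}$ give $f(v)=v$ directly, and for $v\notin W\cup W^{\perp}$ relation (4) yields $f(v)=\sum_{s(e)=v}e\,f(r(e))\,e^{\ast}$. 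Here I use that $W$ is finitary: by the argument of Lemma \ref{lem2} there is no cycle $C$ with $C\Rightarrow W$, so $V\setminus(W\cup W^{\perp})$ is acyclic (and sinks outside $W$ lie in $W^{\perp}$); thus every path from $v$ reaches $W\cup W^{\perp}$ and induction on the longest such path gives $f(v)=\sum_{s(e)=v}ee^{\ast}=v$.

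Third, injectivity via a recovery formula: $\{v:v\,e(W)=v\}=W$ for every finitary $W\in B(\Gamma)$. One direction is clear. Conversely, by the complement law $v\,e(W)=v$ is equivalent to $v\,e(W^{\perp})=0$, i.e.\ to the absence of any arrival path from $v$ into $W^{\perp}$, i.e.\ to $v\in(W^{\perp})^{\perp}=W$, the last equality because $W$ is annihilator hereditary. So $W$ is recovered from $e(W)$ and the map is injective.

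Fourth, surjectivity, which I expect to be the main obstacle. Given a central idempotent $\epsilon\in diagL(\Gamma)$, set $W=\{v:v\epsilon=v\}$. Centrality shows $W$ is hereditary (if $v\epsilon=v$ and $e\colon v\to w$ then $e=e\epsilon$, whence $w=e^{\ast}e=w\epsilon$) and saturated (via relation (4)). One must then show $W$ is finitary and annihilator hereditary, and that $e(W)=\epsilon$. For the last point, $\epsilon=\sum_{v}v\epsilon$ and $v\epsilon$ obeys the same recursion $v\epsilon=\sum_{s(e)=v}e\,(r(e)\epsilon)\,e^{\ast}$ as $v\,e(W)$, so they agree on $W$; to force agreement everywhere I would consider the central idempotents $\epsilon(1-e(W))$ and $e(W)(1-\epsilon)$, each of which has empty vertex-support $\{v:v\eta=v\}=\phi$, and invoke the \emph{Key Lemma}: a nonzero central idempotent $\eta$ has some vertex $v$ with $v\eta=v$. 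All the hard points --- finitariness of $W$ (no cycle $C\Rightarrow W$), its being annihilator hereditary, and the Key Lemma --- are governed by the cyclic structure: following the recursion leads into a terminal strongly connected component, and the conclusion rests on the corner at a sink being $Fv$ and the corner at an $NE$-cycle vertex being $\cong F[t^{-1},t]$, each admitting only the idempotents $0$ and $v$; this is precisely the phenomenon behind the center description $F[t^{-1},t]\oplus\cdots\oplus F$ of [8], which one may invoke to place the primitive central idempotents at sinks and $NE$-cycles. Granting this, $\epsilon=\epsilon\,e(W)=e(W)$. With surjectivity in hand the meet law is automatic: $e(W_{1})e(W_{2})$ is a central idempotent, hence equals $e(W_{3})$ with $W_{3}=\{v:v\,e(W_{1})e(W_{2})=v\}=\{v:v\,e(W_{1})=v\}\cap\{v:v\,e(W_{2})=v\}=W_{1}\cap W_{2}$ by the recovery formula. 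Together with $e(\phi)=0$, $e(V)=1$ and the complement law, this shows $W\mapsto e(W)$ is a Boolean isomorphism.
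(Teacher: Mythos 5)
Your steps 1--3 are correct, and they are genuinely more elementary than the paper's route: the paper obtains idempotency, centrality, the complement law and injectivity by first classifying central idempotents in the graded completion $\widehat{L}(\Gamma)$ (Lemmas 9, 10 and Proposition 2, via the decomposition $\widehat{L}(\Gamma)=I(W_1)\oplus\cdots\oplus I(W_k)$ into topologically graded simple ideals), whereas your induction over the acyclic region $V\setminus(W\cup W^{\perp})$ and the recovery formula $\{v\mid v\,e(W)=v\}=W$ stay inside $L(\Gamma)$. The genuine gap is exactly where you predicted it: surjectivity. Having set $W=\{v\mid v\epsilon=v\}$, you must first show that $W$ is \emph{finitary} --- otherwise $e(W)$ is not even an element of $L(\Gamma)$ and the equation $\epsilon=e(W)$ is meaningless --- and you must prove the Key Lemma that a nonzero central idempotent has nonempty vertex-support. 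Neither is proved; both are deferred to ``the cyclic structure'' and to an appeal to [8]. That appeal defeats the stated purpose of giving an independent characterization, and is in any case insufficient: knowing the center is abstractly $F[t^{-1},t]^{a}\oplus F^{b}$ does not locate the primitive central idempotents in the graph, and placing them ``at sinks and NE-cycles'' is not correct as stated (minimal hereditary sets generating simple ideals also contribute $F$-summands, and several minimal sets tied together by a cycle $C$ with $C\Rightarrow W_i$, $C\Rightarrow W_j$ contribute a single summand --- this merging is the whole point of the equivalence $\sim$ and the sets $U_i$).

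Finitariness of the support is the technical heart of the paper, carried by Lemmas 12--15: one writes $e(W)=\sum_{q\in Arr(W)_{0}}qq^{*}+\sum_{(p,C)}b_{p,C}(W)$ over acyclic arrival paths and cycles hanging off them, shows that membership of $e(W)$ in $L(\Gamma)$ forces each $b_{p,C}(W)$ to collapse to $pCC^{*}p^{*}$ (Lemma 14), and deduces that no cycle can point simultaneously to $W$ and to a disjoint hereditary set (Lemma 15); this is what bounds $Arr(W)$ and forces the support to be a union of the $U_i$. Your Key Lemma is true but is downstream of this classification: the elementary input available is only that the corner $uL(\Gamma)u$ at a vertex $u$ of a minimal hereditary set is $Fu$, $F[t^{-1},t]$ or simple, hence $u\epsilon\in\{0,u\}$; to conclude that some such $u$ satisfies $u\epsilon=u$ when $\epsilon\neq 0$ you need faithfulness of the ideal generated by $W_1\cup\cdots\cup W_k$, which the paper extracts from the completion. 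As written, your argument establishes injectivity and compatibility with the Boolean operations, but not surjectivity, so it does not yet prove the theorem.
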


Let $W_{1}$, $W_{2}$, \ldots , $W_{k}$ be all distinct minimal hereditary
subsets of $V$. Consider a partial equivalence on the integer segment $[1$, $%
k]:$ $i\sim j$ if there exists a cycle $C$ in $\Gamma $ such that $%
C\Rightarrow W_{i}$ and $C\Rightarrow W_{j}$. Extend this partial
equivalence to an equivalence on $[1$, $k]$ by transitivity. In other words,
$i\sim j$ if there exists a sequence $i=i_{1}$, \ldots , $i_{r}=j$ and for
each $1\leq s\leq r-1$ there exists a cycle $C_{s}$ such that $%
C_{s}\Rightarrow W_{i_{s}}$, $C_{s}\Rightarrow W_{i_{s+1}}$.

Let $[1$, $k]=I_{1}\overset{\cdot }{\cup }\cdots \overset{\cdot }{\cup }%
I_{m} $ be the decomposition of $[1$, $k]$ into the union of disjoint
equivalence classes.

Denote $U_{i}=\left( \left( \underset{j\in I_{i}}{\cup }W_{j}\right) ^{\perp
}\right) ^{\perp }$.

\bigskip

\begin{lemma}\label{lem3}
\begin{itemize}
  \item [(a)] $U_{i}$ is a finitary hereditary subset;
  \item [(b)] $U_{i}\cap U_{j}=\phi $ for $1\leq i\neq j\leq m$.
\end{itemize}
\end{lemma}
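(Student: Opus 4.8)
The plan is to reduce both parts to the combinatorics of the minimal hereditary sets and the relation $\Rightarrow$. Write $\widetilde{W}_i=\bigcup_{j\in I_i}W_j$, so that $U_i=((\widetilde{W}_i)^{\perp})^{\perp}$. Being of the form $X^{\perp}$ with $X=(\widetilde{W}_i)^{\perp}$, each $U_i$ is an annihilator hereditary subset, so heredity in (a) is automatic and only the finitary claim needs work. I will lean on three facts. By the maximality statement of Lemma 16 of [4], a vertex $v$ lies in $(X^{\perp})^{\perp}$ exactly when $v$ and all of its descendants can reach $X$; I apply this with $X=\widetilde{W}_i$. Next, two distinct minimal hereditary sets are disjoint, since their intersection is hereditary and minimality forces it to equal each of them. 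Finally, from a vertex lying inside a minimal hereditary set $W_a$ the only minimal hereditary set one can reach is $W_a$ itself, because any path leaving such a vertex stays inside $W_a$ by heredity. Note also that $\widetilde{W}_i\subseteq U_i$.

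For (a) I argue by contradiction, following the mechanism in the proof of Lemma 2. If $U_i$ is not finitary then $Arr(U_i)$ is infinite; as $\Gamma$ is finite, some arrival path repeats one of its source vertices, and the enclosed cycle $C$ satisfies $V(C)\cap U_i=\phi$ (the sources of an arrival path avoid $U_i$) while every vertex of $C$ reaches $U_i$ by continuing the path to its endpoint, i.e.\ $C\Rightarrow U_i$. I will contradict this by proving $V(C)\subseteq U_i$. Since $\widetilde{W}_i\subseteq U_i$ is disjoint from $V(C)$ and every vertex of $C$ reaches $\widetilde{W}_i$, some vertex of $C$ reaches a set $W_{j_0}$ with $j_0\in I_i$; by strong connectedness of the cycle all vertices of $C$ then reach $W_{j_0}$, so $C\Rightarrow W_{j_0}$. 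Now fix $v\in V(C)$, a descendant $d$ of $v$, and a minimal hereditary set $W_l$ reachable from $d$. Then every vertex of $C$ reaches $W_l$. If $V(C)\cap W_l\neq\phi$, heredity forces $V(C)\subseteq W_l$, whence $W_{j_0}=W_l$ and $V(C)\subseteq W_{j_0}$, contradicting $C\Rightarrow W_{j_0}$; therefore $V(C)\cap W_l=\phi$, so $C\Rightarrow W_l$, and together with $C\Rightarrow W_{j_0}$ this gives $l\sim j_0$, hence $l\in I_i$ and $d$ reaches $\widetilde{W}_i$. As $d$ ranged over all descendants of $v$, the membership criterion gives $v\in U_i$ for every $v\in V(C)$, i.e.\ $V(C)\subseteq U_i$, a contradiction.

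For (b) suppose $v\in U_i\cap U_j$ with $i\neq j$, and choose a minimal hereditary set $W_a$ reachable from $v$ (one exists, since $v$ together with its descendants is a nonempty hereditary set in a finite graph). Because $v\in U_i$, every descendant of $v$, and in particular every vertex of $W_a$, reaches $\widetilde{W}_i$; but from inside the minimal set $W_a$ only $W_a$ is reachable among minimal sets, so $W_a=W_l$ for some $l\in I_i$, that is $a\in I_i$. The same reasoning applied to $U_j$ yields $a\in I_j$, contradicting $I_i\cap I_j=\phi$. Hence $U_i\cap U_j=\phi$.

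The crux is the finitary assertion in (a). The subtlety is that $C\Rightarrow U_i$ only guarantees that each vertex of $C$ reaches $\widetilde{W}_i$, whereas membership $V(C)\subseteq U_i$ demands that every descendant reach $\widetilde{W}_i$; a descendant could a priori run off to a minimal set $W_l$ outside the cluster $I_i$. This is precisely where the transitive closure in the definition of $\sim$ enters: such a $W_l$ is reached by all of $C$, producing simultaneously $C\Rightarrow W_l$ and $C\Rightarrow W_{j_0}$, which pulls $l$ into $I_i$ and prevents $C$ from escaping the cluster. Part (b), by contrast, is routine once one notes that no minimal hereditary set other than $W_a$ is reachable from inside $W_a$.
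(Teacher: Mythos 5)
Your proof is correct and follows essentially the same route as the paper's: extract a cycle $C$ with $C\Rightarrow U_i$, then use the transitive closure defining $\sim$ to show that $C$ cannot simultaneously point into the cluster $I_i$ and out of it, and for (b) use that distinct minimal hereditary sets are disjoint and mutually unreachable. You fill in some details the paper leaves implicit (the pigeonhole extraction of the cycle, the case $V(C)\cap W_l\neq \phi$) and you land the contradiction at $V(C)\subseteq U_i$ rather than directly at the equivalence relation, but the underlying mechanism is identical.
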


\begin{proof}
\begin{itemize}
 \item [(a)] If the subset $U_{i}$ is not finitary then there exists a cycle $C$ such
that $C\Rightarrow U_{i}$.

\medskip

By [4, Lemma 16] for an arbitrary vertex $v\in U_{i}$ there exists a path $%
p\in Path(\Gamma )$ such that $s(p)=v$, $r(p)\in $ $\underset{j\in I_{i}}{%
\cup }W_{j}$. Hence $C\Rightarrow \underset{j\in I_{i}}{\cup }W_{j}$. This
implies that vertices from $V(C)$ do not lie in $\left( \underset{j\in I_{i}}%
{\cup }W_{j}\right) ^{\perp }$. Since vertices from $V(C)$ also do not lie
in $U_{i}=\left( \left( \underset{j\in I_{i}}{\cup }W_{j}\right) ^{\perp
}\right) ^{\perp }$ we conclude that $C\Rightarrow \left( \underset{j\in
I_{i}}{\cup }W_{j}\right) ^{\perp }$.

An arbitrary vertex in $V$ has a descendant in $W_{1}\cup \cdots \cup W_{k}$
(see [4 ]). Descendants of vertices from $\left( \underset{j\in I_{i}}{%
\cup }W_{j}\right) ^{\perp }$ lie in $\underset{j\in \lbrack 1\text{, }%
k]\backslash I_{i}}{\cup }W_{j}$.

Hence, $C\Rightarrow \underset{j\in I_{i}}{\cup }W_{j}$ and $C\Rightarrow $ $%
\underset{j\in \lbrack 1\text{, }k]\backslash I_{i}}{\cup }W_{j}$, which
contradicts our definition of the equivalence in $[1$, $k]$. This completes
the proof of the assertion (a).

\medskip

 \item [(b)] Now suppose that $v\in U_{i}\cap U_{j}$, $i\neq j$. By [4] the vertex $v$
has a descendant in $\underset{t\in I_{j}}{\cup }W_{t}$. But $\underset{t\in
I_{j}}{\cup }W_{t}\subseteq \left( \underset{q\in I_{i}}{\cup }W_{q}\right)
^{\perp }$, hence the vertex $v$ can not

lie in $\left( \left( \underset{q\in I_{i}}{\cup }W_{q}\right) ^{\perp
}\right) ^{\perp }=U_{i}$.
\end{itemize}
\end{proof}

\bigskip

\begin{theorem}\label{thm2}
Let $\Gamma$ be a finite graph.
\begin{itemize}
  \item [(a)] $Z(L(\Gamma ))=\overset{m}{\underset{i=1}{\oplus }}\varphi_{U_{i}}(Z(L(U_{i}))).$
  \item [(b)] If $U_{i}=((W_j)^{\perp})^\perp,$where $W_j=V(C),$  $C$ is a finitary NE-cycle, then $Z(L(U_{i}))\cong F[t^{-1}, t]$ (see [3]). Otherwise $Z(L(U_{i}))\cong F$.
\end{itemize}
\end{theorem}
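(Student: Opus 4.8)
The plan is to prove (a) by splitting $Z(L(\Gamma))$ along the idempotents $e(U_i)$ and then matching each block with $\varphi_{U_i}(Z(L(U_i)))$, and to deduce (b) as a center computation on the much simpler graphs $\Gamma(U_i)$. First I would show that $e(U_1),\dots ,e(U_m)$ are pairwise orthogonal central idempotents summing to $1$. Each $U_i$ is a finitary annihilator hereditary subset (Lemma~\ref{lem3}(a) together with $U_i=((\cup_{j\in I_i}W_j)^{\perp})^{\perp}$), so $e(U_i)$ is central by Theorem~\ref{thm1}. Orthogonality is immediate from Lemma~\ref{lem3}(b): under the Boolean isomorphism of Theorem~\ref{thm1} one has $e(U_i)e(U_j)=e(U_i\cap U_j)=e(\phi)=0$. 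For the sum I would use $U_i^{\perp}=(\cup_{j\in I_i}W_j)^{\perp}$ and hence $\bigcap_i U_i^{\perp}=(\bigcup_{j=1}^{k}W_j)^{\perp}=\phi$, since every vertex has a descendant in $W_1\cup\cdots\cup W_k$ (recalled from [4]); the Boolean join of the $e(U_i)$ is therefore $e(V)=\sum_{v\in V}v=1$. Consequently $Z(L(\Gamma))=\bigoplus_{i=1}^{m}Z(L(\Gamma))e(U_i)$, with $Z(L(\Gamma))e(U_i)=Z(e(U_i)L(\Gamma))$.

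The heart of (a) is the identification $Z(L(\Gamma))e(U_i)=\varphi_{U_i}(Z(L(U_i)))$. Writing $f_i=\sum_{w\in U_i}w$, so that $f_iL(\Gamma)f_i=L(U_i)$ because $U_i$ is hereditary, I would consider the compression $\rho_i\colon z\mapsto f_izf_i$. For central $z$ one checks $f_izf_i=zf_i\in Z(L(U_i))$, and a short calculation using $pf_i=p$ and $f_ip^{\ast}=p^{\ast}$ for $p\in Arr(U_i)$ gives $\varphi_{U_i}(\rho_i(z))=\sum_{p}pzp^{\ast}=z\,e(U_i)$. This already yields $Z(L(\Gamma))e(U_i)\subseteq\varphi_{U_i}(Z(L(U_i)))$. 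For the reverse inclusion I would invoke the property (stated in the text and established in Section 4 for the completion, the finite case being its restriction) that the diagonal embedding sends $Z(L(U_i))$ into $Z(L(\Gamma))$; since $\varphi_{U_i}(w)=\varphi_{U_i}(w)e(U_i)$ and $\rho_i\circ\varphi_{U_i}=\mathrm{id}$ on $Z(L(U_i))$, the two maps are mutually inverse bijections between $Z(L(U_i))$ and $Z(L(\Gamma))e(U_i)$. Summing over $i$ gives (a).

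For (b) the point is that $\Gamma(U_i)$ is assembled from a single equivalence class $I_i$. I would first show that $L(U_i)$ has no central idempotents other than $0$ and $e(U_i)$: by Theorem~\ref{thm1} applied to $\Gamma(U_i)$ such an idempotent would arise from a proper nonempty finitary annihilator hereditary $W'\subsetneq U_i$; but $(X^{\perp})^{\perp}$ is the smallest annihilator hereditary set containing $X$, so $W'$ must contain some minimal set $W_j$ and omit another $W_{j'}$ with $j,j'\in I_i$, and a reaching argument as in Lemma~\ref{lem3} against the connecting cycles $C_s$ yields a contradiction. Hence $Z(L(U_i))$ is indecomposable, and by [8] it is a single summand, either $F[t^{-1},t]$ or $F$. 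To decide which, I would use that a Laurent summand is present precisely when $\Gamma(U_i)$ contains a cycle without exits whose vertex set is finitary: if $C$ is such a cycle then $V(C)$ is a minimal hereditary set, and finitariness of $V(C)$ forbids any cycle $\widetilde{C}$ with $\widetilde{C}\Rightarrow V(C)$, which forces $I_i=\{j\}$ with $W_j=V(C)$, i.e. exactly the case $U_i=((W_j)^{\perp})^{\perp}$ where [3] gives $Z(L(U_i))\cong F[t^{-1},t]$; in every other case no finitary NE-cycle exists and the indecomposable center must be $F$.

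The main obstacle I expect is twofold. In (a) it is the center-to-center property of $\varphi_{U_i}$ needed for the reverse inclusion; everything else there is bookkeeping with the Boolean algebra of Theorem~\ref{thm1}. In (b) the delicate point is matching the combinatorial condition ``finitary NE-cycle with $U_i=((V(C))^{\perp})^{\perp}$'' to the algebraic dichotomy $F[t^{-1},t]$ versus $F$. In particular one must keep in mind that an NE-cycle contributes a Laurent summand only when it is finitary: if some cycle feeds into $V(C)$ then $Arr(V(C))$ is infinite, the would-be central generator $\sum_{p\in Arr(V(C))}p\,c\,p^{\ast}$ lives only in the completion $\widehat{L}(\Gamma)$, the class $I_i$ ceases to be a singleton, and $Z(L(U_i))\cong F$.
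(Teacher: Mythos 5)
Your part (a) is essentially correct and takes a genuinely different route from the paper. The paper derives (a) by passing through the completion: Proposition 1 gives $Z(\widehat{L}(\Gamma ))=\oplus _{i}\varphi _{W_{i}}(Z(\widehat{L}(W_{i})))$, Lemma 11 and its Corollary identify which elements survive in $L(\Gamma )$, and Lemma 17 regroups the idempotent part along the classes $I_{i}$. You instead take Theorem 1 as the input, split $Z(L(\Gamma ))$ by the orthogonal central idempotents $e(U_{i})$ summing to $1$, and match each block with $\varphi _{U_{i}}(Z(L(U_{i})))$ via the mutually inverse maps $z\mapsto f_{i}zf_{i}$ and $\varphi _{U_{i}}$. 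This works and is organizationally cleaner (though not logically independent, since Theorem 1 is itself proved with the completion machinery). Two points should be made explicit: $f_{i}L(\Gamma )f_{i}=L(U_{i})$ uses hereditarity of $U_{i}$ applied to the monomials $pq^{\ast }$, and the center-to-center property of $\varphi _{U_{i}}$ is stated in the paper only for completions (Lemma 4(c)); you get the algebraic version either by repeating that edge-by-edge computation (the sums are finite because $U_{i}$ is finitary) or from $Z(L(U_{i}))\subseteq Z(\widehat{L}(U_{i}))$ together with $Z(\widehat{L}(\Gamma ))\cap L(\Gamma )\subseteq Z(L(\Gamma ))$.

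Part (b) has a genuine gap. After reducing to ``$Z(L(U_{i}))$ has no proper central idempotents, hence by [8] it is $F[t^{-1},t]$ or $F$,'' you decide between the two by asserting that a Laurent summand is present precisely when $\Gamma (U_{i})$ contains a finitary NE-cycle. That equivalence is the nontrivial content of the theorem, and it does not follow from indecomposability plus the structure theorem of [8]: that theorem does not tell you which of the two algebras occurs, and its criterion (in [9]) is phrased in terms of extreme cycles, so even citing it would require a translation to finitary NE-cycles that you do not supply. What is missing is exactly the paper's hard direction: a nonzero homogeneous central element of nonzero degree forces the minimal set to be the vertex set of an NE-cycle (Lemmas 6 and 7, via the decomposition of $pp^{\ast }$ in the basis $B(\gamma )$), and such an element lies in $L(\Gamma )$ rather than only in $\widehat{L}(\Gamma )$ exactly when $|Arr(V(C))|<\infty $ (Lemma 11). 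Without these, the ``otherwise $Z(L(U_{i}))\cong F$'' half of (b) is unproved. A secondary, fillable point: your ``no proper central idempotents'' step needs the connecting cycles $C_{s}$ witnessing $j\sim j'$ to lie inside $U_{i}$ so that the equivalence class $I_{i}$ stays a single class in $\Gamma (U_{i})$; this is true (any cycle with $C\Rightarrow W_{j}$, $j\in I_{i}$, has all descendants reaching only sets $W_{t}$ with $t\sim j$, hence $V(C)\subseteq U_{i}$), but it is not the ``reaching argument of Lemma 3'' verbatim and should be spelled out.
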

\bigskip

In  Section 4 we describe the centers of Leavitt path algebras of row finite infinite graphs.

For a $NE$-cycle $C$ denote $L(C)=L(V(C)), \, Arr(C)=Arr(V(C)),$ where $V(C)$ is hereditary set of vertices lying on the cycle $C.$ We say that the cycle $C$ is finitary if $V(C)$ is finitary.

\begin{theorem}\label{thm2}
Let $\Gamma$ be a row finite graph. Then the center $Z(L(\Gamma))$ is the sum of subspaces of the following types:
\begin{itemize}
  \item [(a)] $Fe(W),$ where $W$ runs over finite hereditary finitary nonempty subsets of $V,$
  \item [(b)] $F$-span of $\{\sum\limits_{p\in Arr(C)}pzp^*\mid z\in Z(L(C))\},,$ where  $C$ runs over NE-cycle, which are finitary in $\Gamma.$
\end{itemize}
\end{theorem}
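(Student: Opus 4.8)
The plan is to prove the stated equality by establishing each inclusion separately, reducing the ``spanning'' direction to the finite-graph case already treated above.

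First I would check that every subspace of types (a) and (b) is contained in $Z(L(\Gamma))$. If $W$ is a finitary hereditary subset then $W\subseteq Arr(W)$ forces $W$ to be finite, and $e(W)=\sum_{p\in Arr(W)}pp^{*}$ is an honest (finite) element of $L(\Gamma)$. Since $\sum_{w\in W}w$ is the identity of $L(W)$, and hence central there, the center-preserving property of the diagonal embedding (announced in \S2) gives $e(W)=\varphi_{W}\!\left(\sum_{w\in W}w\right)\in Z(L(\Gamma))$, which is (a). For a finitary NE-cycle $C$ the set $Arr(C)$ is finite, the vertex set $V(C)$ is hereditary because $C$ has no exit, and for $z\in Z(L(C))$ the element $\sum_{p\in Arr(C)}pzp^{*}=\varphi_{V(C)}(z)$ again lies in $Z(L(\Gamma))$ by the same property; this is (b).

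For the reverse inclusion, let $w\in Z(L(\Gamma))$ be nonzero. Being a finite combination of monomials lying in $diagL(\Gamma)$, it has finite support $S=\{v\in V:\,vwv\neq0\}$, and $w=\sum_{v\in S}w_{v}$ with $w_{v}=vwv$. A direct computation gives $we=w_{s(e)}e$, $ew=ew_{r(e)}$, $we^{*}=w_{r(e)}e^{*}$ and $e^{*}w=e^{*}w_{s(e)}$, so centrality of $w$ is equivalent to the transport law $w_{s(e)}e=ew_{r(e)}$ (together with its adjoint) for every edge $e$ of $\Gamma$. In particular $w_{v}e=0$ whenever $e$ leaves $S$ and $ew_{v}=0$ whenever $e$ enters $S$, so the local data of $w$ never escapes $S$. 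These are precisely the relations that govern centrality in a finite graph; restricting to the finite subgraph $\Gamma'$ carrying the nonzero data, I would invoke Theorem~\ref{thm1} and the finite-graph decomposition $Z(L(\Gamma'))=\bigoplus_{i}\varphi_{U_{i}}(Z(L(U_{i})))$ to write $w=\sum_{i}\varphi_{U_{i}}(z_{i})$ with $z_{i}\in Z(L(U_{i}))$ and each $Z(L(U_{i}))$ equal to $F$ or $F[t^{-1},t]$. A component with $z_{i}\in F$ equals $c_{i}\,e(U_{i})$, a term of type (a); a Laurent component, where $U_{i}=((V(C))^{\perp})^{\perp}$ for an NE-cycle $C$, is rewritten in the form $\sum_{p\in Arr(C)}pzp^{*}$ of type (b) by factoring each $p\in Arr(C)$ as an arrival path to $U_{i}$ followed by an arrival path from $U_{i}$ to $V(C)$, which matches $\varphi_{U_{i}}$ with $\varphi_{V(C)}$ on the centers.

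The step I expect to be the main obstacle is the interface between the finite and the infinite data, i.e.\ showing that the hereditary sets and cycles actually occurring in the decomposition of a finitely supported central element of the infinite algebra must be finitary in $\Gamma$ and, in the cyclic case, exit-free. Finiteness of the support is automatic, but a boundary vertex of $S$ that is a non-sink of $\Gamma$ carries the full relation (4) $v=\sum_{s(e)=v}ee^{*}$, whose terms need not lie inside $\Gamma'$, so I must check that the transport law $w_{s(e)}e=ew_{r(e)}$ applied across all---possibly infinitely many---arrival paths is solvable inside $L(\Gamma)$ only when the relevant $Arr(W)$ or $Arr(C)$ is finite. Concretely, if an occurring NE-cycle $C$ had $\abs{Arr(C)}=\infty$, completing its local cyclic data to a central element would force the infinite sum $\sum_{p\in Arr(C)}pzp^{*}$, contradicting that $w$ is a finite combination; and an exit of a cycle would, exactly as in the finite theory, break the scalarity responsible for the factor $F[t^{-1},t]$. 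Establishing this equivalence---that central finiteness forces precisely the finitary conditions recorded in (a) and (b)---is the crux, and it is what separates this theorem from its finite-graph counterpart.
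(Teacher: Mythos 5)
Your containment direction (that the subspaces of types (a) and (b) lie in the center) is correct and matches the paper: finitary forces $Arr(W)$ finite, so $e(W)$ and $\sum_{p\in Arr(C)}pzp^{*}$ are honest elements of $L(\Gamma)$, and centrality follows from the center-preserving property of the diagonal embedding (Lemma 4(c)). The problem is the reverse inclusion, where your plan has a genuine gap that you yourself flag but do not close. ``Restricting to the finite subgraph $\Gamma'$ carrying the nonzero data'' is not a legitimate operation: because of relation (4), $v=\sum_{s(e)=v}ee^{*}$, at a boundary vertex of the support, $L(\Gamma')$ is neither a subalgebra nor a quotient of $L(\Gamma)$ in any way that transports centrality, so you cannot ``invoke Theorem 1 and the finite-graph decomposition'' for $\Gamma'$. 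The paper's device is different and is exactly what fills this hole: set $W=\{v\in V\mid zv=0\}$, observe that $W$ is hereditary and \emph{saturated} and that $V\setminus W$ is finite, and work in $L(\Gamma)/I(W)\cong L(\Gamma/W)$, the Leavitt path algebra of the finite quotient graph. The finite-graph theorem then describes $z$ only \emph{modulo} $I(W)$, and the real content of the infinite case is Lemma 19: every cyclic vertex occurring in that description lies in $W^{\perp}$, so the cycles $C_{j}$ have no exits in $\Gamma$ itself (not merely in $\Gamma/W$) and the hereditary sets $U_{i}'$ are hereditary and finitary in $\Gamma$. This is proved by exhibiting, for a putative path $p'$ from the cycle into $W$, a long closed path $p$ with $z'p=0$ and comparing $pp'z=0$ with $zpp'\neq0$, contradicting centrality. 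Finally one forms the candidate central element $z_{1}$ from the corrected data and concludes $z=z_{1}$ from $z-z_{1}\in I(W)$, $(z-z_{1})^{2}=0$ and semiprimeness of $L(\Gamma)$.

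In short: your ``transport law'' observation is fine but does not substitute for the quotient-by-$I(W)$ argument, and the step you call ``the crux'' --- that the occurring cycles and hereditary sets are finitary and exit-free in all of $\Gamma$ --- is precisely Lemma 19 plus the semiprimeness finish, neither of which your sketch supplies. As written, the proposal identifies the correct statement of the difficulty but does not prove the theorem.
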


\begin{example}
Let $\Gamma=$ \includegraphics[width=.2\textwidth]{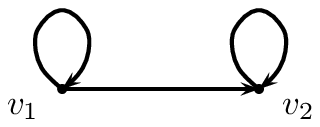} . The set $\{v_2\}$ is hereditary, but not finitary. Thus there are no proper finitary hereditary subsets and
$Z(L(\Gamma))=F\cdot1.$
\end{example}

\begin{example}
Let $\Gamma=$ \includegraphics[width=.1\textwidth]{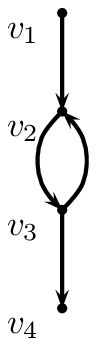} . The set $\{v_2,v_3,v_4\}$ is hereditary and  finitary, but $(\{v_2,v_3,v_4\}^\bot)^\bot=V.$ Thus there are no proper finitary hereditary annihilator subsets and again $Z(L(\Gamma))=F\cdot1.$
\end{example}

\begin{example}
Let $\Gamma=$ \includegraphics[width=.2\textwidth]{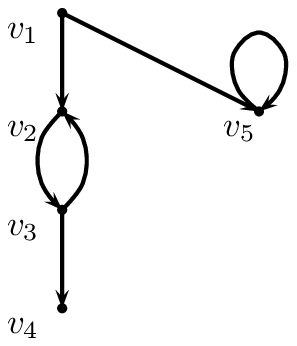} . The only finitary hereditary annihilator subsets are $\{v_5\}$ and  $\{v_2,v_3,v_4\}.$ Hence $Z(L(\Gamma))=F[t^{-1},t]\oplus F.$
\end{example}

\begin{example}
Let $\Gamma=$ \includegraphics[width=.3\textwidth]{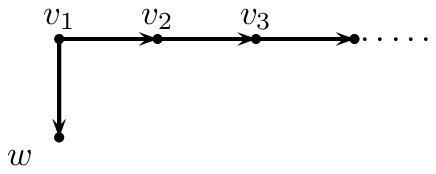} . The graph is an infinite graph and the only finite finitary hereditary subset is $\{w\}.$
Hence the center $Z(L(\Gamma))$ is one-dimensional and spanned by $e(\{w\})=w+ee^*.$
\end{example}

\section{ Centers of completed Leavitt path algebras}

\bigskip

We start with some definitions and results from [4].

\medskip

A mapping $\gamma :V\backslash \{sinks\}\rightarrow E$ is called a
specialization if $s(\gamma (v))=v$ for an arbitrary vertex $v\in
V\backslash \{sinks\}$. Edges lying in $\gamma (V\backslash \{sinks\})$ are
called special.

\medskip

For a specialization $\gamma $ consider the set $B(\gamma )$ of products $%
pq^{\ast }$, where $p=e_{1}\cdots e_{n}$, $q=f_{1}\cdots f_{m}$ are paths in
$\Gamma $; $e_{i}$, $f_{j}\in E$; $r(p)=r(q)$ and either $e_{n}\neq f_{m}$
or $e_{n}=f_{m}$, but this edge is not special.

\medskip

In [5] we proved that $B(\gamma )$ is a basis of the algebra $L(\Gamma )$.

\medskip

We call a \ path $p=e_{1}\cdots e_{n}$, $e_{i}\in E$, special if all edges $%
e_{1}$, \ldots , $e_{n}$ are special.

\medskip

For an arbitrary path $p=e_{1}\cdots e_{n}$ let $i$ be the minimal integer
such that the path $e_{i+1}\cdots e_{n}$ is special. If the edge $e_{n}$ is
not special then $i=n$. Let $sd(p)=n-i$.

\medskip

The algebra $L(\Gamma )$ is $\mathbb{Z}
$-graded: $\deg(V)=0, \deg(E)=1, \deg(E^*)=-1$.

\medskip

For nonnegative real numbers $n,\, s,\, d$ consider the subspace $V_{n,s,d}$
of $L(\Gamma )$ spanned by all products $pq^{\ast }$ such that $p$, $q\in
Path(\Gamma )$, $l(p)+l(q)\geq n$, $sd(p)+sd(q)\leq s$, $|deg(pq^{\ast })|=|l(p)-l(q)|\leq d$.

\medskip

For $k\geq 1$ let $V_{k}=\sum \{V_{n,s,d} | n\geq k(s+d+1)\}$.

\medskip

In [4] we proved that $\underset{k\geq 1}{\cap }V_{k}=(0)$. Hence, $%
\{V_{k}\}_{k\geq 1}$ can be viewed as a basis of neighborhoods of $0$. The
topology defined by $\{V_{k}\}_{k\geq 1}$ is compatible with algebraic
operations on $L(\Gamma )$, thus, $L(\Gamma )$ becomes a topological
algebra. Let $\overline{L(\Gamma )}$ be the completion of the topological
algebra $L(\Gamma )$. Let $\overline{L(\Gamma )_{i}}$ be the completion of
the homogeneous component $L(\Gamma )_{i}$ of degree $i$ in the algebra $%
\overline{L(\Gamma )}$. We call $\widehat{L}(\Gamma )=\underset{i\in
\mathbb{Z}
}{\sum }\overline{L(\Gamma )_{i}}$ the graded completion of $L(\Gamma )$.

In [4] it was shown that $B(\gamma )$ is a topological basis of the algebra $%
\widehat{L}(\Gamma )$. It implies that for a nonempty hereditary subset $%
W\subset V$ the identical mappings $W\rightarrow W$, $E(W$,$W)\rightarrow E(W
$,$W)$ extend to an embedding $\widehat{L}(W)\rightarrow \widehat{L}(\Gamma )
$.

\medskip

In [4] we proved that for an arbitrary nonempty hereditary subset $W\subset V
$ the sum $e(W)=\underset{p\in Arr(W)}{\sum }pp^{\ast }$ converges in $%
\widehat{L}(\Gamma )$ and $e(W)$ is a central idempotent.

Let $W_{1}$, \ldots , $W_{k}$ be all distinct minimal hereditary subsets of $%
V$. In [4] we proved existence of a specialization $\gamma :V\backslash
\{sinks\}\rightarrow E$ with the following properties:
\begin{itemize}
\item[1)] there are finitely many special paths $p=e_{1}\cdots e_{n}$, $e_{i}\in E,$ such that
$s(e_{1}), \ldots , s(e_{n})\in V\setminus \left( \overset{k}{\underset{i=1}{\cup }}W_{i}\right) $;
\item[2)] for each subset $W_{i}$, which does not consist of one sink the graph $%
(W_{i}$, $\gamma (W_{i}))$ is connected.
\end{itemize}

In [4] such specializations are called regular. From now on we assume that $%
\gamma :V\backslash \{sinks\}\rightarrow E$ is a regular specialization.

\bigskip

Now we are ready to define the diagonal embedding

$\varphi _{W}:$ $diag$ $\widehat{L}(W)\rightarrow diag$ $\widehat{L}(\Gamma )
$ for a not necessarily finitary hereditary subset $W\subset V$.

\bigskip

\begin{lemma}\label{lem4}
\begin{itemize}
  \item [(a)] For an arbitrary element $a\in \widehat{L}(W)$ the sum $\varphi_{W}(a)=\underset{p\in Arr(W)}{\sum }pap^{\ast }$ converges in $\widehat{L}(\Gamma )$;
  \item [(b)] the mapping $diag$ $\widehat{L}(W)\rightarrow diag$ $\widehat{L}(\Gamma) $, $a\mapsto \varphi_{W}(a)$, is an embedding;
  \item [(c)] $\varphi _{W}$ maps the center $Z(\widehat{L}(W))$ to the center $Z(\widehat{L}(\Gamma ))$.
\end{itemize}
\end{lemma}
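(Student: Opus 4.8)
The plan is to establish the three parts in order: prove the convergence (a) first, extract from its proof the continuity of $\varphi_{W}$, and then deduce (b) and (c) from purely algebraic identities among arrival paths together with that continuity. For (a) I would first reduce, by linearity and continuity, to a single basis element $a=\alpha\beta^{*}$ with $\alpha,\beta\in Path(\Gamma(W))$ and $r(\alpha)=r(\beta)$. For such $a$ we have $pap^{*}=(p\alpha)(p\beta)^{*}$, a product whose degree equals $\deg(\alpha\beta^{*})$ independently of $p$, and whose total length $l(p\alpha)+l(p\beta)=2\,l(p)+l(\alpha)+l(\beta)$ tends to infinity with $l(p)$ (row-finiteness gives only finitely many arrival paths of each length). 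To force these terms into $V_{k}(\Gamma)$ for large $l(p)$, the only quantity that must be controlled is the special degree $sd(p\alpha)+sd(p\beta)$, and everything hinges on a uniform bound $sd(p)\le N$ on the length of the maximal special suffix of any $p\in Arr(W)$.

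This uniform bound is the step I expect to be \textbf{the main obstacle}, and I would prove it as follows. Let $p''$ be the maximal special suffix of $p\in Arr(W)$; it is a special path all of whose source vertices lie outside $W$. If some source of $p''$ lay in a minimal hereditary set $W_{j}$, then $r(p'')\in W$ would be a descendant of a vertex of $W_{j}$, hence $r(p'')\in W_{j}$ by heredity, so $W\cap W_{j}$ would be a nonempty hereditary subset of $W_{j}$; minimality of $W_{j}$ then forces $W_{j}\subseteq W$, whence a source of $p''$ would lie in $W$, contradicting $p\in Arr(W)$. Thus every source of $p''$ lies outside $\bigcup_{i}W_{i}$, and property (1) of the regular specialization confines $p''$ to a finite set of special paths, giving $sd(p)\le N$ uniformly. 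Consequently $sd(p\alpha)+sd(p\beta)\le l(\alpha)+l(\beta)+2N$ is bounded, and a basis element in $V_{n,s,d}(W)$ maps into $V_{n,\,s+2N,\,d}(\Gamma)$; since $s+2N+d+1\le(1+2N)(s+d+1)$, the map $\varphi_{W}$ carries $V_{k}(W)$ into $V_{k'}(\Gamma)$ with $k'=\lfloor k/(1+2N)\rfloor\to\infty$. Hence $\varphi_{W}$ is continuous on $diag\,\widehat{L}(W)$, the sum converges for every $a\in\widehat{L}(W)$, and (a) follows.

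For (b) the key relation is the orthogonality $p^{*}p'=\delta_{p,p'}\,r(p)$ for $p,p'\in Arr(W)$, valid because no arrival path is a proper prefix of another (its range already lies in the hereditary set $W$). Writing $a=\sum_{w\in W}waw$ for $a\in diag\,\widehat{L}(W)$ and using $pw=\delta_{r(p),w}\,p$, a short computation gives $\varphi_{W}(a)\varphi_{W}(b)=\sum_{p}p\,a_{r(p)}b_{r(p)}\,p^{*}=\varphi_{W}(ab)$, so $\varphi_{W}$ is an algebra homomorphism on the diagonal; these identities pass to the completion by the continuity from (a). Injectivity follows by compression: $p_{0}^{*}\,\varphi_{W}(a)\,p_{0}=r(p_{0})\,a\,r(p_{0})$, and since each $w\in W$ is the range of the trivial arrival path $w$, the vanishing of $\varphi_{W}(a)$ forces $waw=0$ for all $w$, i.e. $a=0$.

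For (c), note that a central $z\in Z(\widehat{L}(W))$ lies in $diag\,\widehat{L}(W)$, so $\varphi_{W}(z)$ is defined and lies in $diag\,\widehat{L}(\Gamma)$; it therefore suffices to check that $\varphi_{W}(z)$ commutes with each generator $v,e,e^{*}$. Commutation with vertices is immediate from $\varphi_{W}(z)\in diag\,\widehat{L}(\Gamma)$. For an edge $e$ I would split on whether $s(e)\in W$: if $s(e)\notin W$, the assignment $p\mapsto ep$ is a length-preserving bijection between arrival paths with $s(p)=r(e)$ and arrival paths beginning with $e$, which yields $e\,\varphi_{W}(z)=\varphi_{W}(z)\,e$ with no appeal to centrality; if $s(e)\in W$ then $e\in E(W,W)\subseteq\widehat{L}(W)$, both products collapse to the trivial-arrival-path contributions $e\,z\,r(e)$ and $s(e)\,z\,e$, and the identity to be verified is precisely $ez=ze$ in $\widehat{L}(W)$, which holds since $z\in Z(\widehat{L}(W))$. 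The case of $e^{*}$ follows by applying the involution to the edge case. Therefore $\varphi_{W}(z)\in Z(\widehat{L}(\Gamma))$, completing (c).
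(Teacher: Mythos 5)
Your proposal is correct and follows essentially the same route as the paper: the same uniform bound on $sd(p)$ for $p\in Arr(W)$ coming from property (1) of the regular specialization drives the convergence in (a), and parts (b) and (c) rest on the same orthogonality $p^{*}q=\delta_{p,q}r(p)$ for arrival paths and the same case analysis on edges. Your explicit argument that the maximal special suffix of an arrival path has all its sources outside $\bigcup_{i}W_{i}$ (via minimality of the $W_{i}$) fills in a step the paper only asserts with a reference to property (1).
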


\begin{proof}
\begin{itemize}
\item[(a)] Without a loss of generality we will assume that the element $a$ is
homogeneous, $deg(a)=d$. Let $a=\sum \alpha _{q_{1}q_{2}}q_{1}q_{2}^{\ast
}\in \widehat{L}(W)$; $\alpha _{q_{1}q_{2}}\in F;$ $q_{1}$, $q_{2}\in Path(W)
$ be a converging sum. Because of the property (1) of the specialization $%
\gamma $ there are finitely many special paths in $Arr(W)$. Let $s$ be the
maximum of lengths of these paths.

\medskip

Fix $k\geq 1$. We will show that all but finitely many elements $%
pq_{1}q_{2}^{\ast }p^{\ast }$, $p\in Arr(W)$, lie in $V_{k}$.

If $q_{1}q_{2}^{\ast }\in V_{k}$ and $l(p)\geq ks$ then $pq_{1}q_{2}^{\ast
}p^{\ast }\in V_{k}$.

Indeed, $l(q_{1})+l(q_{2})+2l(p)\geq k(sd(q_{1})+sd(q_{2})+2s+d+1)$. From
continuity of algebraic operations in $\widehat{L}(\Gamma )$ it follows that
there exists $k_{1}\geq k$ such that $pV_{k_1}p^{\ast }\subseteq V_{k}$ for
all arrival paths $p$ with $l(p)<ks$. Now $q_{1}q_{2}^{\ast }\in V_{k_{1}}$
implies $pq_{1}q_{2}^{\ast }p^{\ast }\in V_{k}$ for all $p\in Arr(W)$.

Let $\alpha _{q_{1}q_{2}}(i)q_{1}^{(i)}q_{2}^{(i)^{\ast }}$, $1\leq i\leq r$%
, be all summands in the decomposition of $a$ that do not lie in $V_{k_{1}}$.

\medskip

Let $l=\max\{\frac{1}{2}(k(sd(q_{1}^{(i)})+sd(q_{2}^{(i)})+2s+d+1)-l(q_{1}^{(i)})-l(q_{2}^{(i)})), 1\leq i\leq r\}$.

\medskip

If $p\in Arr(W)$ and $l(p)\geq l$ then $pq_{1}^{(i)}q_{2}^{(i)^{\ast
}}p^{\ast }\in V_{k}$ for $i=1, \ldots , r$. Hence, $p\in Arr(W)$, $%
l(p)\geq l$ implies that $pq_{1}q_{2}^{\ast }p^{\ast }\in V_{k}$ for all
summands in the decomposition of $a$.

Now the only elements $pq_{1}q_{2}^{\ast }p^{\ast }$, that may not lie in $%
V_{k}$ are those with $l(p)<l$, $q_{1}q_{2}^{\ast }\in
\{q_{1}^{(i)}q_{2}^{(i)^{\ast }}$, $1\leq i\leq r\}$. This proves the
assertion (a).

\medskip

\item[(b)] Let $p$, $q\in Path(\Gamma )$. We say that the path $p$ is a beginning
of the path $q$, while the path $q$ is a continuation of the path $p$ if
there exists a path $q^{\prime }\in Path(\Gamma )$ such that $q=pq^{\prime }$%
. We will use the following straightforward fact: if $p$, $q
\in Path(\Gamma )$ then $p^{\ast }q\neq 0$ if and only if one of the paths
$p$, $q$ is a continuation of the other.

\medskip

If $p$, $q\in Arr(W)$ and one of them is a continuation of the other one
then $p=q$.

\medskip

This implies that for arbitrary elements $a$, $b\in $ $diag\widehat{L}(W)$
we have

$\varphi _{W}(a)\varphi _{W}(b)=\sum pap^{\ast }qbq^{\ast }=\sum
par(p)bp^{\ast }=\sum pabp^{\ast }=\varphi _{W}(ab)$.

\medskip

Hence $\varphi _{W}:diag\widehat{L}(W)\rightarrow diag\widehat{L}(\Gamma )$
is a homomorphism. It is easy to see that this homomorphism is continuos.
Finally, for an arbitrary vertex $w\in W$ we have $w\varphi _{W}(a)w=waw$, $%
a=\underset{w\in W}{\sum }w\varphi _{W}(a)w$. It implies that the
homomorphism $\varphi _{W}:diag\widehat{L}(W)\rightarrow diag\widehat{L}%
(\Gamma )$ is an embedding.

\medskip

\item[(c)] Now let $a\in Z(\widehat{L}(W))$. We will show that $\varphi _{W}(a)\in
Z(\widehat{L}(\Gamma))$. Since the algebra $L(\Gamma )$ is dense in $\widehat{L}%
(\Gamma )$ it is sufficient to show that $\varphi _{W}(a)$ commutes with all
edges $e\in E$. Suppose at first that $r(e)\notin W$. Then $e\varphi _{W}(a)=%
\underset{p\in Arr(r(e),W)}{\sum }epap^{\ast }$, $\varphi _{W}(a)e=\sum
pap^{\ast }e$, where the sum is taken over all paths $p\in Arr(W)$ that are
continuations of the edge $e$. It is easy to see that $eArr(r(e),W)=\{p\in
Arr(W)$ $|$ $p$ is a continuation of $e\}$. This implies the result.

\medskip

Now suppose that $r(e)=w\in W$, but $s(e)\notin W$, thus $e\in Arr(W)$. Then
$e\varphi _{W}(a)=eaw$, $\varphi _{W}(a)e=eae^{\ast }e=eaw$.

\medskip

Finally, if $s(e)$, $r(e)\in W$ then $e\varphi _{W}(a)=ea=ae=\varphi
_{W}(a)e $ since $a\in Z(\widehat{L}(W))$.
\end{itemize}
 This finishes the proof of the
lemma.
\end{proof}

\bigskip

Let $I(W_{i})$ be the (closed) ideal generated by the set $W_{i}$ in $%
\widehat{L}(\Gamma )$. In [4] we proved that $\widehat{L}(\Gamma
)=I(W_{1})\oplus \cdots \oplus I(W_{k})$ and, for regular specializations $%
\gamma $, each summand $I(W_{i})$ is a topologically graded simple algebra.

\bigskip

\begin{lemma}\label{lem5}
 Let $W$ be a minimal hereditary subset of $V$. Then $Z(I(W))=\varphi
_{W}(Z(\widehat{L}(W))$.
\end{lemma}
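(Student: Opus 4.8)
The plan is to prove the two inclusions $\varphi_W(Z(\widehat{L}(W)))\subseteq Z(I(W))$ and $Z(I(W))\subseteq\varphi_W(Z(\widehat{L}(W)))$ separately, the first being an easy consequence of Lemma \ref{lem4} and the second resting on a ``matrix-unit'' analysis of $I(W)$ indexed by $Arr(W)$. I would begin by recording two structural facts. Since $e(W)=\sum_{p\in Arr(W)}pp^{\ast}$ is a central idempotent of $\widehat{L}(\Gamma)$ and each $pp^{\ast}$ lies in the closed ideal $I(W)$, the sum $e(W)$ lies in $I(W)$ and $I(W)=e(W)\widehat{L}(\Gamma)$, so that $e(W)$ is the identity of $I(W)$. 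Moreover $\widehat{L}(W)\subseteq I(W)$, because each generator $w\in W$ and $e,e^{\ast}$ with $e\in E(W,W)$ lies in $I(W)$ (as $r(e)\in W$); consequently, for $a\in\widehat{L}(W)$ the element $\varphi_W(a)=\sum_p pap^{\ast}$ lies in $I(W)$, the sum converging by Lemma \ref{lem4}(a) and $I(W)$ being closed.

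For the inclusion $\supseteq$, I would take $a\in Z(\widehat{L}(W))$. Lemma \ref{lem4}(c) gives $\varphi_W(a)\in Z(\widehat{L}(\Gamma))$, and by the above $\varphi_W(a)\in I(W)$. An element of $\widehat{L}(\Gamma)$ that is central and lies in $I(W)$ commutes in particular with every element of $I(W)$, so it belongs to $Z(I(W))$.

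The substantive direction is $\subseteq$. Given $z\in Z(I(W))$, I would use $z=e(W)ze(W)$ together with continuity of multiplication to expand $z=\sum_{p,q\in Arr(W)}p\,a_{pq}\,q^{\ast}$ with $a_{pq}:=p^{\ast}zq$. The key observation is that every $a_{pq}$ lies in $\widehat{L}(W)$: indeed $a_{pq}\in r(p)\widehat{L}(\Gamma)r(q)$ with $r(p),r(q)\in W$, and since $W$ is hereditary any basis element $gh^{\ast}$ with source in $W$ has $g,h\in Path(W)$, whence $w\widehat{L}(\Gamma)w'\subseteq\widehat{L}(W)$ for $w,w'\in W$. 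Thus $z$ is represented by an $Arr(W)\times Arr(W)$ matrix over $\widehat{L}(W)$ whose $(p,q)$ entry is read off by $p^{\ast}(\cdot)q$. Now I would exploit centrality: commuting $z$ with the idempotents $qq^{\ast}$ and extracting entries (using $g^{\ast}s=\delta_{g,s}r(g)$ for distinct arrival paths, which is the orthogonality established in Lemma \ref{lem4}(b)) forces $a_{pq}=0$ for $p\neq q$, so $z=\sum_p p\,a_{pp}\,p^{\ast}$; commuting $z$ with the arrival paths $p\in Arr(W)$ and again extracting entries yields $a_{pp}=a_{ww}$ for $w=r(p)$. Setting $e_0=\sum_{w\in W}w$ and $a:=e_0ze_0=\sum_{w\in W}wzw$, one gets $p\,a_{pp}\,p^{\ast}=p\,a\,p^{\ast}$ for every $p$, hence $z=\sum_p pap^{\ast}=\varphi_W(a)$. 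A short check, using that $e_0$ is a local identity of the corner $e_0I(W)e_0=\widehat{L}(W)$ and that $e_0z=ze_0$, shows $a=ze_0$ commutes with every $b\in\widehat{L}(W)$, so $a\in Z(\widehat{L}(W))$, completing the inclusion.

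I expect the main obstacle to be the topological bookkeeping rather than the algebra: justifying that $z=e(W)ze(W)$ unfolds into the convergent double sum $\sum_{p,q}p\,a_{pq}\,q^{\ast}$, that the entry-extraction maps $x\mapsto p^{\ast}xq$ are continuous so that the commutation identities may be evaluated termwise, and that $\sum_p pap^{\ast}$ converges to $\varphi_W(a)$. All of these rest on continuity of multiplication in $\widehat{L}(\Gamma)$ and on Lemma \ref{lem4}(a). Once the representation $z=\sum_{p,q}p\,a_{pq}\,q^{\ast}$ is in hand, the vanishing of off-diagonal entries and the control of the diagonal by a single central element $a$ is a routine matrix-unit computation; notably, minimality of $W$ is not needed here (it will enter only later, when $Z(\widehat{L}(W))$ is identified explicitly for the minimal sets $W_i$).
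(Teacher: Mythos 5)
Your proposal is correct and follows essentially the same route as the paper: both represent $z\in Z(I(W))$ as a convergent sum $\sum_{p,q\in Arr(W)}p\,a_{p,q}\,q^{\ast}$ with $a_{p,q}\in\widehat{L}(W)$, use centrality together with the fact that distinct arrival paths are never continuations of one another to kill the off-diagonal terms, and then use $p^{\ast}zp=r(p)z$ to reduce the diagonal to a single central element $a=\sum_{w\in W}wz$ with $z=\varphi_W(a)$. The only differences are presentational (you derive the matrix representation from $z=e(W)ze(W)$ and phrase the entry extraction via commutation with $qq^{\ast}$ and $p$, where the paper writes $p^{\ast}zq=p^{\ast}qz$ directly), and your observation that minimality of $W$ is not used in this step is consistent with the paper's argument.
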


\begin{proof}
Lemma 4(c) implies that $\varphi _{W}(Z(\widehat{L}(W))\subseteq Z(\widehat{L%
}(\Gamma ))$.

\medskip

An arbitrary element $z$ from $I(W)$ can be represented as a converging sum $%
z=\sum pa_{p,q}q^{\ast }$, where $p$, $q\in Arr(W)$; $a_{p,q}\in \widehat{L}%
(W)$. Suppose that $z\in Z(I(W))$. Then $z=\underset{v\in V}{\sum }vzv$. In
other words, we can assume that in each summand $s(p)=s(q)$.

Let $pa_{p,q}q^{\ast }$ be a nonzero summand such that $s(p)=s(q)\notin W$.
We have $p^{\ast }zq=a_{p,q}=p^{\ast }qz\neq 0$. Hence one of the paths $p$,
$q$ is a continuation of the other path. But both paths lie in $Arr(W)$.
This implies $p=q$. We proved that $z=\sum pa_{p}p^{\ast }$; $p\in Arr(W)$; $%
a_{p}\in \widehat{L}(W)$.

Let $pa_{p}p^{\ast }$ be a nonzero summand. We have $p^{\ast
}zp=a_{p}=p^{\ast }pz=r(p)z$. It is easy to see that the element $a=\underset%
{w\in W}{\sum }wz$ lies in the center of $\widehat{L}(W)$. We have $z=%
\underset{p\in Arr(W)}{\sum }pap^{\ast }=\varphi _{W}(a)$, which finishes
the proof of the lemma.
\end{proof}

\bigskip

Let $p\in Path(\Gamma).$ Let's decompose $pp^*$ as a linear combination of basic elements from $B(\gamma).$
If the last edge of the path $p$ is not special then $pp^*\in B(\gamma).$ Let $p=p'p'',$ the last edge of the path $p'$ is not special or $p'$ is a vertex,
$p''=e_1\cdots e_n$ is a special path. Denote $v_i=s(e_i),\, 1\leq i \leq n.$

For an arbitrary vertex $v\in V$ let $\mathcal{E}(v)$ denote the set of all non special edges with source at $v.$
Then $$pp^*=p'p'^*-\sum\limits_{k=1}^n p'e_1\cdots e_{k-1}\left(\sum\limits_{\scriptsize e\in\mathcal{E}(v_k)} ee^*\right)e^*_{k-1}\cdots e^*_1p'^*.$$

\begin{lemma}\label{lem6}
Let $p$ be a closed path of length $\geq 1$ such that $pp^{\ast }=p^{\ast
}p=r(p)$. Then $p=C^{m}$, where $C$ is an NE-cycle.
\end{lemma}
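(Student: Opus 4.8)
The plan is to notice first that the identity $p^{*}p=r(p)$ holds for \emph{every} path $p$: applying relation (3) from the inside out, $e_{1}^{*}e_{1}=r(e_{1})=s(e_{2})$ collapses, and iterating leaves $r(e_{n})=r(p)$. Thus the genuine hypothesis is the single equation $pp^{*}=r(p)=s(p)$. I would then prove that this equation forces every edge $e_{i}$ of $p=e_{1}\cdots e_{n}$ to be the \emph{unique} edge emanating from its source $s(e_{i})$, and finally read off from this that $p$ traverses a fixed NE-cycle an integral number of times.

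For the key step I work in the partial order on idempotents ($a\le b$ meaning $ab=ba=a$). For any path $q=f_{1}\cdots f_{r}$ write $qq^{*}=f_{1}Yf_{1}^{*}$ with $Y=(f_{2}\cdots f_{r})(f_{2}\cdots f_{r})^{*}$; using the automatic identity above one checks $Y\le r(f_{1})=s(f_{2})$ and hence $qq^{*}\le f_{1}f_{1}^{*}\le s(f_{1})=s(q)$. If $qq^{*}=s(q)$, then conjugating the equation $f_{1}Yf_{1}^{*}=s(f_{1})$ by $f_{1}^{*}$ on the left and $f_{1}$ on the right yields $Y=r(f_{1})$, and feeding this back gives $f_{1}f_{1}^{*}=s(f_{1})$. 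Since $s(f_{1})=\sum_{s(e)=s(f_{1})}ee^{*}$ is a sum of pairwise orthogonal nonzero idempotents, $f_{1}f_{1}^{*}=s(f_{1})$ holds exactly when $f_{1}$ is the only edge with source $s(f_{1})$; meanwhile $Y=s(f_{2})$ is precisely the same hypothesis for the shorter path $f_{2}\cdots f_{r}$. Inducting on length therefore shows that $pp^{*}=s(p)$ implies each $e_{i}$ is the unique edge out of $s(e_{i})$. (The expansion of $pp^{*}$ in the basis $B(\gamma)$ displayed just before the lemma gives an equivalent route: $pp^{*}=r(p)$ can hold only when the non-special part $p'$ is a single vertex and every set $\mathcal{E}(v_{k})$ is empty, which is again the assertion that each source carries a single outgoing edge.)

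It remains to organize the combinatorics. Put $v_{i}=s(e_{i})$ and $v_{n+1}=r(e_{n})=s(e_{1})=v_{1}$. Since each $v_{i}$ has the single outgoing edge $e_{i}$ with $r(e_{i})=v_{i+1}$, the successor assignment $v_{i}\mapsto v_{i+1}$ is a well-defined self-map of $\{v_{1},\dots ,v_{n}\}$, so the sequence $v_{1},v_{2},\dots$ is determined by $v_{1}$ and, being closed, is purely periodic; let $l\ge 1$ be its minimal period. Then $v_{1},\dots ,v_{l}$ are distinct, $l\mid n$, and $C=e_{1}\cdots e_{l}$ is a cycle with $p=C^{m}$ for $m=n/l$. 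Finally $C$ is an NE-cycle: an exit would be an edge $f\notin E(C)$ with $s(f)\in V(C)$, but every vertex of $C$ emits only its edge in $E(C)$, so no exit exists. The main work is the idempotent comparison of the second paragraph; once each source is shown to have a unique outgoing edge, identifying $p$ as $C^{m}$ and verifying the NE property are routine.
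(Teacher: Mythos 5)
Your proof is correct, and for the key step it takes a genuinely different route from the paper. The paper reads the hypothesis $pp^{\ast}=s(p)$ against the displayed expansion of $pp^{\ast}$ in the basis $B(\gamma)$: since $s(p)$ is itself a basis element, the non-special prefix $p'$ must be a vertex and every set $\mathcal{E}(v_k)$ must be empty, so each $v_i$ emits a single edge. You reach the same intermediate claim --- each $s(e_i)$ emits only $e_i$ --- but by a basis-free induction on length: writing $qq^{\ast}=f_1Yf_1^{\ast}$, conjugating by $f_1^{\ast}$ and $f_1$ to get $Y=r(f_1)$ and hence $f_1f_1^{\ast}=s(f_1)$, and then invoking the orthogonality of the nonzero idempotents $ee^{\ast}$ in the relation $v=\sum_{s(e)=v}ee^{\ast}$ to conclude that $f_1$ is the unique edge out of $s(f_1)$, with $Y=s(f_2)$ restarting the induction. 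Your argument uses only the defining relations (plus the standard fact that vertices, hence the $ee^{\ast}$, are nonzero) and is independent of the specialization $\gamma$ and the basis $B(\gamma)$, which makes it more elementary and self-contained; the paper's version is shorter because that machinery is already set up for the surrounding lemmas. You also correctly observe that $p^{\ast}p=r(p)$ is automatic, and you spell out the final combinatorial step (pure periodicity of the successor map, $l\mid n$, $p=C^{m}$, no exits) that the paper compresses into ``This implies the result.'' No gaps.
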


\begin{proof}
%
%
Consider the above decomposition of the element $pp^*$ as a linear combination of basic elements from $B(\gamma).$
Since $pp^*=s(p)$ we conclude that $p'$ is a vertex, hence $p=e_1\cdots e_n$ is a special path. Moreover, the subsets $ \mathcal{E}(v_i),\, 1\leq i \leq n,$ are empty.
Hence for each vertex $v_i$ there is only one edge with the source at $v_i.$ This implies the result.
\end{proof}

\bigskip

\begin{lemma}\label{lem7}
Let $W$ be a minimal hereditary subset of $V$. If $Z(\widehat{L}(W))$ has
a nonzero homogeneous component of nonzero degree then $W=V(C)$, where $C$
is an NE-cycle.
\end{lemma}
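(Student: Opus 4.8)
The plan is to produce, from a hypothetical graded central element, a genuine closed path to which Lemma 6 applies. Suppose $z\in Z(\widehat{L}(W))$ is a nonzero homogeneous element of degree $d\neq 0$. Applying the involution, $z^{\ast}$ is again central and homogeneous of degree $-d$, so we may assume $d>0$. First I would record two consequences of $W$ being a minimal hereditary subset: for any $w\in W$ the vertex $w$ together with its descendants is a nonempty hereditary subset of $W$, hence equals $W$, so $\Gamma(W)$ is strongly connected; and since the hypothesis forces at least one edge (a single sink gives $\widehat{L}(W)=F$, concentrated in degree $0$), every vertex of $W$ has out-degree at least one. The endgame is then clear: once an NE-cycle $C$ is located inside $\Gamma(W)$, its vertex set $V(C)$ is hereditary (absence of exits keeps all descendants on $C$) and nonempty, so minimality of $W$ forces $W=V(C)$, which is the assertion.

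Next I would upgrade $z$ to a unit. For minimal $W$ the authors have already shown that $I(W)$ is topologically graded simple, and Lemma 5 identifies $Z(I(W))$ with $\varphi_{W}(Z(\widehat{L}(W)))$. Since $\varphi_{W}$ is an embedding that preserves degree (because $pap^{\ast}$ has the same degree as $a$) and sends $1=\sum_{w\in W}w$ to the identity $e(W)$ of $I(W)$, it restricts to a unital graded isomorphism $Z(\widehat{L}(W))\cong Z(I(W))$. The latter, being the center of a graded simple algebra, is a graded field: every nonzero homogeneous element is invertible. Hence our $z$ is invertible with central homogeneous inverse $z^{-1}$ of degree $-d$. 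Fixing a vertex $v$ and using $vz=zv$ together with $zz^{-1}=1$, the element $vz$ is a homogeneous unit of degree $d$ in the corner $v\,\widehat{L}(W)\,v$, with inverse $vz^{-1}$.

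The heart of the matter, and the step I expect to be the main obstacle, is to show that such a positive-degree homogeneous unit $vz$ must be a scalar multiple of a single closed-path monomial $p$ (of length $d$, with $s(p)=r(p)=v$) satisfying $pp^{\ast}=p^{\ast}p=r(p)$. Here I would expand $vz$ and $vz^{-1}$ in the basis $B(\gamma)$, normalizing each $pp^{\ast}$ summand by means of the displayed decomposition preceding Lemma 6. The point of that decomposition is that a non-special edge at some source vertex $v_{k}$ on a path, that is, $\mathcal{E}(v_{k})\neq\phi$, which is precisely an exit, injects basis terms into the product $(vz)(vz^{-1})$ that have no partner to cancel against, contradicting $(vz)(vz^{-1})=v$. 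Degree bookkeeping together with the filtration $V_{n,s,d}$ controls the convergent sums and isolates the shortest special path occurring in $vz$; invertibility then forces its support to collapse to a single special closed path through vertices each carrying a unique outgoing edge, which yields $pp^{\ast}=p^{\ast}p=r(p)$ exactly as in the computation performed in the proof of Lemma 6.

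Finally I would invoke Lemma 6: a closed path $p$ of length $\geq 1$ with $pp^{\ast}=p^{\ast}p=r(p)$ equals a power $C^{m}$ of an NE-cycle $C$ lying in $\Gamma(W)$. Combining this with the endgame of the first paragraph, namely that $V(C)$ is hereditary and nonempty inside the minimal set $W$, gives $W=V(C)$ and completes the proof. The only delicate point is the extraction carried out in the third paragraph; the surrounding steps are formal, resting on the already-established graded simplicity of $I(W)$, on Lemma 5, and on Lemma 6.
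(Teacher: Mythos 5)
Your reduction to $d>0$ via the involution, and your endgame (an NE-cycle's vertex set is hereditary, so minimality of $W$ forces $W=V(C)$), are both fine and agree with what the paper does implicitly. But the proposal has a genuine gap, and it sits exactly where you yourself flag ``the main obstacle'': the extraction of a closed path $p$ with $pp^{\ast}=p^{\ast}p=r(p)$ from the homogeneous central element is never actually performed. ``Degree bookkeeping together with the filtration $V_{n,s,d}$ \ldots\ invertibility then forces its support to collapse'' is a description of the desired conclusion, not an argument; nothing in the proposal explains why a homogeneous unit of positive degree in the corner $v\,\widehat{L}(W)\,v$ cannot be a genuine linear combination of several monomials $pq^{\ast}$ with $l(q)\geq 1$. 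The paper's proof supplies precisely the missing mechanism: write $z=\sum\alpha_{p,q}pq^{\ast}$ in the basis $B(\gamma)$, choose a pair $(p_{0},q_{0})$ with $l(p_{0})$ minimal, and compare the $B(\gamma)$-expansions of $p_{0}^{\ast}z$ and $zp_{0}^{\ast}$. If every occurring $q$ had $l(q)\geq 1$, the term $q_{0}^{\ast}$ would appear on one side and not the other; hence $l(q_{0})=0$, and a second comparison of the two expansions yields $p_{0}p_{0}^{\ast}=p_{0}^{\ast}p_{0}=r(p_{0})$, at which point Lemma 6 applies. Note that this argument uses only centrality ($p_{0}^{\ast}z=zp_{0}^{\ast}$), not invertibility.

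Second, the step by which you ``upgrade $z$ to a unit'' is itself unjustified. For an honestly graded simple algebra the center is a graded field because $zA$ is a nonzero graded ideal, hence all of $A$. But $\widehat{L}(W)$ is only \emph{topologically} graded simple: that hypothesis gives that the closed graded ideal $\overline{zA}$ equals $A$, i.e.\ that $zA$ is dense, which does not by itself produce an inverse of $z$. So even the framework in which you propose to run the (missing) collapse argument is not established. Since the invertibility detour is both unproved and unnecessary, you should drop it and carry out the minimal-length comparison of $p_{0}^{\ast}z$ with $zp_{0}^{\ast}$ directly.
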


\begin{proof}
Suppose that an element $0\neq z=\sum \alpha _{p,q}pq^{\ast }\in Z(\widehat{L%
}(W))$ has degree $d\geq 1$; $\alpha _{p,q}\in F$; $p$, $q\in Path(\Gamma )$%
, $pq^{\ast }\in B(\gamma )$, $s(p)=s(q)\in W$.

\medskip

Let $P$ denote the set of pairs

\qquad $P=\{(p$, $q)\in Path(\Gamma )\times Path(\Gamma )$ $|$ $\alpha
_{p,q}\neq 0\}$.

\medskip

For each pair $(p$, $q)\in P$ we have $l(p)-l(q)=d$. Choose a pair $(p_{0}$,
$q_{0})\in P$ such that the length $l(p_{0})$ is minimal.

\medskip

We claim that $l(q_{0})=0$. Indeed, suppose that for all pairs $(p$, $q)\in P
$ we have $l(q)\geq 1$. Then

\medskip

\qquad $p_{0}^{\ast }z=\underset{(p_{0},q_{i})\in P}{\sum }\alpha
_{p_{0,}q_{i}}q_{i}^{\ast }+\underset{l(p^{\prime })\geq 1}{\underset{%
(p_{0}p^{\prime },q)\in P}{\sum }}\alpha _{p_{0}p^{\prime },q}p^{\prime
}q^{\ast }$,

\qquad $zp_{0}^{\ast }=\sum \alpha _{p,q}p(p_{0}q)^{\ast }$.

\medskip

Elements $p^{\prime }q^{\ast }$ and nonzero elements $p(p_{0}q)^{\ast }$ lie
in the basis $B(\gamma )$. If $l(q)\geq 1$ for all $(p$, $q)\in P$ then the $%
q_{0}^{\ast }$ does not appear in the sums $\underset{l(p^{\prime })\geq 1}{%
\underset{(p_{0}p^{\prime },q)\in P}{\sum }}\alpha _{p_{0}p^{\prime
},q}p^{\prime }q^{\ast }$ and $\underset{(p\text{, }q)\in P}{\sum }\alpha
_{p,q}p(p_{0}q)^{\ast }$. Hence $l(q_{0})=0$. Now,

\medskip

$p_{0}^{\ast }z=\alpha v+\underset{l(p^{\prime })\geq 1\text{,}l(q)\geq 1}{%
\underset{(p_{0}p^{\prime },q)\in P}{\sum }}\alpha _{p_{0}p^{\prime
},q}p^{\prime }q^{\ast }$, $0\neq \alpha \in F$, $v=r(p_{0})$,

$zp_{0}^{\ast }=\alpha p_{0}p_{0}^{\ast }+\underset{p\neq p_{0}}{\sum }%
\alpha _{p,q}p(p_{0}q)^{\ast }$.

\medskip

This implies $p_{0}p_{0}^{\ast }=p_{0}^{\ast }p_{0}=v$. Now it remains to
refer to Lemma 6. to complete the proof of the lemma.
\end{proof}

\bigskip

If $W=V(C)$, where $C=e_{1}\cdots e_{n}$ is a NE-cycle, $|W|=n$, then an
algebra $\widehat{L}(W)=L(W)$ (see [4]) is isomorphic to the algebra of $%
n\times n$ matrices over Laurent polynomials ([3]). In this case the center
$$ Z(L(W))=\sum\limits_{i\geq 1} Fz^{i}+\sum\limits_{i\geq 1}F(z^{\ast })^{i}+\sum\limits_{v\in V(C)}v\cong F[t^{-1},t],$$
where $z=e_{1}\cdots e_{n}+e_{2}e_{3}\cdots e_{n}e_{1}+\cdots +e_{n}e_{1}\cdots e_{n-1}$.

\bigskip

\begin{lemma}\label{lem8}
 Let $W$ be a minimal hereditary subset of $V$, which is not a set of
vertices of an NE-cycle. Then $Z(\widehat{L}(W))=F\cdot \underset{w\in W}{%
\sum }w$.
\end{lemma}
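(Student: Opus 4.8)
The plan is to reduce to the homogeneous component of degree $0$ and then pin that component down by commuting $z$ with edges. First I would check that $Z(\widehat{L}(W))$ is a graded subspace: if $z=\sum_i z_i$ is the homogeneous decomposition of a central element and $x$ is homogeneous of degree $d$, then comparing the degree-$(i+d)$ parts of $zx=xz$ gives $z_i x = x z_i$, so each $z_i$ is again central. Since $W$ is not the vertex set of an NE-cycle, Lemma 7 forbids nonzero central components of positive degree, and applying the involution $\ast$ (which preserves the center and negates the degree) rules out the negative degrees as well. Hence every $z\in Z(\widehat{L}(W))$ is homogeneous of degree $0$.

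Next I would fix such a $z\neq 0$. Because $z$ is central it is diagonal, $z=\sum_{w\in W} wzw$, so I may write $z=\sum_{w\in W}\beta_w w+\sum \alpha_{p,q}\,pq^{\ast}$ in the basis $B(\gamma)$, where the second sum runs over $pq^{\ast}\in B(\gamma)$ with $s(p)=s(q)$ and $l(p)=l(q)\geq 1$. The goal is to show that all $\alpha_{p,q}$ vanish and that all $\beta_w$ coincide, so that $z=\beta\sum_{w\in W}w$; the single-sink case $W=\{w\}$ is immediate since then $\widehat{L}(W)=Fw$, so I assume $W$ has no sinks.

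Now I would extract coefficient relations. For a single edge $e$ I read off the coefficient of the basis element $e$ in $ez=ze$: if $e$ is special (so $ee^{\ast}\notin B(\gamma)$) this yields $\beta_{r(e)}=\beta_{s(e)}$, while for a non-special $e$ it yields $\beta_{r(e)}=\beta_{s(e)}+\alpha_{e,e}$. Since the regular specialization $\gamma$ makes the special subgraph $(W,\gamma(W))$ connected (property (2)), the first family of relations forces $\beta_w$ to be constant on $W$; feeding this back into the second family gives $\alpha_{e,e}=0$ for every non-special $e$. Commuting $z$ with two distinct edges $e_1\neq e_2$ sharing a source gives $e_1^{\ast}ze_2=e_1^{\ast}e_2z=0$, and reading this vanishing in $B(\gamma)$ shows that every term $pq^{\ast}$ has matching first edges; in particular every length-$1$ term $\alpha_{e_1,e_2}$ with $e_1\neq e_2$ is $0$, so all length-$1$ terms have now disappeared.

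Finally I would remove the higher terms by induction on length, using the compression identity $e^{\ast}ze=r(e)z$, which gives $\alpha_{ep_1,eq_1}=\alpha_{p_1,q_1}$ whenever $l(p_1)\geq 1$: a length-$n$ term $pq^{\ast}$ either has distinct first edges, whence $\alpha_{p,q}=0$ by the previous paragraph, or shares its first edge $e$, whence $\alpha_{p,q}=\alpha_{p_1,q_1}$ for a length-$(n-1)$ term, which vanishes by the inductive hypothesis. This leaves $z=\beta\sum_{w\in W}w$, as claimed. I expect the main obstacle to be exactly this last step, the elimination of the higher-length (and in particular the diagonal $pp^{\ast}$) terms: commutation with single edges only constrains lengths $\le 1$, so the induction genuinely requires combining the compression identity with the matching-first-edge relation, and its base case leans on the connectivity of the special subgraph to force $\beta$ constant and $\alpha_{e,e}=0$.
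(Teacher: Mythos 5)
Your proof is correct, but it follows a genuinely different route from the paper's. Both arguments start the same way: reduce to homogeneous degree zero via Lemma 7 (the paper takes the gradedness of the center for granted; you prove it, which is a worthwhile addition), and both dispose of the single-sink case first. After that the paper splits $zw$ into a finite part $S_{1}$ and a tail $S_{2}$ lying in $V_{2}$, conjugates by the idempotents $g_{w}(n)g_{w}(n)^{\ast}$ built from the special paths, shows the tail is crushed to $0$ while the finite part stabilizes to $\alpha\, g_{w}(n)g_{w}(n)^{\ast}$, and then concludes from $e_{w}(z-\alpha\cdot 1)=0$ using the nonvanishing of the limit idempotent $e_{w}$ and the topological graded simplicity of $\widehat{L}(W)$ --- two structural facts imported from [4]. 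You instead do direct coefficient bookkeeping in the topological basis $B(\gamma)$: the edge relations $\beta_{r(e)}=\beta_{s(e)}$ (special $e$) and $\beta_{r(e)}=\beta_{s(e)}+\alpha_{e,e}$ (non-special $e$), the orthogonality $e_{1}^{\ast}ze_{2}=0$ killing terms with distinct first edges, the compression $e^{\ast}ze=r(e)z$ giving $\alpha_{ep_{1},eq_{1}}=\alpha_{p_{1},q_{1}}$, and induction on path length; the connectivity of the special subgraph (property (2) of regular specializations) forces $\beta$ constant. What each buys: your argument avoids the graded simplicity of $\widehat{L}(W)$ and the idempotents $e_{w}$ entirely, so it is more elementary and self-contained, at the price of longer computations; the paper's is shorter once the machinery of [4] is in place. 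The only point you should make explicit is that all your coefficient comparisons are performed on possibly infinite converging sums, so they rest on $B(\gamma)$ being a \emph{topological} basis of $\widehat{L}(\Gamma)$ and on continuity of multiplication (both stated in Section 2, from [4]), together with the easy observation that left or right multiplication by an edge sends distinct basis elements to distinct basis elements or to zero; with that noted, the proof is complete.
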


\begin{proof}
By Lemma 7 all elements of $Z(\widehat{L}(W))$ have degree $0$. Let $0\neq
z\in Z(\widehat{L}(W))$. Choose a vertex $w\in W$. We have $zw=S_{1}+S_{2}$,
where $S_{1}$ is a finite linear combination of basic elements from $%
B(\gamma )$; $S_{2}=\underset{j}{\sum }\alpha _{j}p_{j}q_{j}^{\ast }$; $p_{j}
$, $q_{j}\in Path(\Gamma )$, $l(p_{j})=l(q_{j})$, $s(p_{j})=s(q_{j})=w$, $%
p_{j}q_{j}^{\ast }\in B(\gamma )$ and all summands $p_{j}q_{j}^{\ast }$ lie
in $V_{2}$.

\medskip

If the subset $W$ consists of one sink then the assertion is clear.

We will assume therefore that the subset $W$ does not contain sinks.

For a vertex $w\in W$ consider the sequence of special paths: $g_{w}(0)=w$, $%
g_{w}(n+1)=g_{w}(n)\cdot \gamma (r(g_{w}(n)))$. We claim that $%
g_{w}(n)g_{w}(n)^{\ast }S_{2}g_{w}(n)g_{w}(n)^{\ast }\rightarrow 0$ as $%
n\rightarrow \infty $.

Indeed, let $pq^{\ast }\in V_{2}$, and consider the element $%
g_{w}(n)g_{w}(n)^{\ast }pq^{\ast }g_{w}(n)g_{w}(n)^{\ast }$. The inclusion $%
pq^{\ast }\in V_{2}$ implies that $2l(p)\geq 2(s(p)+s(q)+1)$. Hence both
paths $p$, $q$ contain non-special edges. Hence the path $g_{w}(n)$ can not
be a continuation of the path $p$ (resp. $q$). If $p$ and $q$ are extensions
of the path $g_{w}(n)$ then $g_{w}(n)g_{w}(n)^{\ast }pq^{\ast
}g_{w}(n)g_{w}(n)^{\ast }=pq^{\ast }$.

\medskip

It implies that $g_{w}(n)g_{w}(n)^{\ast }S_{2}g_{w}(n)g_{w}(n)^{\ast }=%
\underset{j}{\sum }\alpha _{j}p_{j}q_{j}^{\ast }$, the summation is taken
over those elements, where both $p_{j}$, $q_{j}$ are extensions of $g_{w}(n)$%
. Clearly, this sequence converges to $0$ as $n\rightarrow \infty $.

Now consider the element $S_{1}=\overset{r}{\underset{i=1}{\sum }}\alpha
_{i}p_{i}q_{i}^{\ast }$, $l(p_{i})=l(q_{i})$, $\alpha _{i}\in F$.

Let $n\geq $ $max\ (l(p_{i})$, $1\leq i\leq r)$. For each summand we have $%
g_{w}(n)^{\ast }p_{i}q_{i}^{\ast }g_{w}(n)=0$ unless $g_{w}(n)$ is an
extension of the path $p_{i}=q_{i}$, in which case $g_{w}(n)^{\ast
}p_{i}q_{i}^{\ast }g_{w}(n)=r(g_{w}(n))$. Let $\alpha =\sum \{\alpha _{j}|$ $%
g_{w}(n)$ is an extension of $p_{i}=q_{i}\}$. Then $g_{w}(n)^{\ast
}S_{1}g_{w}(n)=\alpha r(g_{w}(n))$ and $g_{w}(n)g_{w}(n)^{\ast
}S_{1}g_{w}(n)g_{w}(n)^{\ast }=\alpha g_{w}(n)g_{w}(n)^{\ast }$.

\medskip

In [4] we proved that $e_{w}=\underset{n\rightarrow \infty }{lim}%
g_{w}(n)g_{w}(n)^{\ast }$ is a nonzero idempotent in $\widehat{L}(W)$. From
what we proved above it follows that $\underset{n\rightarrow \infty }{lim}%
g_{w}(n)g_{w}(n)^{\ast }zg_{w}(n)g_{w}(n)^{\ast }=\alpha $ $lim$ $%
g_{w}(n)g_{w}(n)^{\ast }$, $e_{w}z=\alpha e_{w}$, $e_{w}(z-\alpha \cdot 1)=0$%
. Since $z-\alpha \cdot 1\in Z(\widehat{L}(W))$ and $\widehat{L}(W)$ is a
topologically graded simple algebra we conclude that $z=\alpha \cdot 1$,
which completes the proof of the lemma.
\end{proof}

\bigskip

The decomposition $\widehat{L}(W)=I(W_{1})\oplus \cdots \oplus I(W_{k})$
together with Lemmas 7, 8 implies the following description of the center of
$\widehat{L}(\Gamma )$.

\bigskip

\begin{prop}
\begin{itemize}
 \item[(a)] $Z(\widehat{L}(\Gamma ))=\overset{k}{\underset{i=1}{\oplus }}\varphi_{W_{i}}(Z(\widehat{L}(W_{i})))$;
\item[(b)] the center $Z(\widehat{L}(W_{i}))$ is isomorphic to the algebra of
Laurent polynomials if $W_{i}$ is an NE-cycle. Otherwise $Z(\widehat{L}(W_{i}))\cong F$.
\end{itemize}
\end{prop}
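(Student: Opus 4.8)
The plan is to assemble the Proposition directly from the ideal decomposition $\widehat{L}(\Gamma) = I(W_1) \oplus \cdots \oplus I(W_k)$ together with Lemmas 5, 7, and 8, so that essentially no new computation is required. For part (a), I would use that this is a finite direct sum of two-sided ideals, so that $I(W_i)I(W_j) = 0$ whenever $i \neq j$. Writing an arbitrary element $z \in \widehat{L}(\Gamma)$ uniquely as $z = z_1 + \cdots + z_k$ with $z_i \in I(W_i)$, the cross products $z_i y$ and $y z_i$ vanish for every $y \in I(W_j)$, $j \neq i$; hence $z$ is central in $\widehat{L}(\Gamma)$ if and only if each $z_i$ centralizes $I(W_i)$, that is, $z_i \in Z(I(W_i))$. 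This gives $Z(\widehat{L}(\Gamma)) = \bigoplus_{i=1}^k Z(I(W_i))$, and substituting the identification $Z(I(W_i)) = \varphi_{W_i}(Z(\widehat{L}(W_i)))$ furnished by Lemma 5 yields part (a) at once.

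For part (b), I would split into the two cases covered by the preceding lemmas, which are exhaustive by Lemma 7. If $W_i = V(C)$ for an NE-cycle $C = e_1 \cdots e_n$, then $\widehat{L}(W_i) = L(W_i)$ is the algebra of $n \times n$ matrices over $F[t^{-1}, t]$, and the displayed computation preceding Lemma 8 identifies its center with $F[t^{-1}, t]$ through the element $z = e_1 \cdots e_n + \cdots + e_n e_1 \cdots e_{n-1}$. Otherwise $W_i$ is a minimal hereditary subset that is not the vertex set of an NE-cycle, and Lemma 8 gives $Z(\widehat{L}(W_i)) = F \cdot \sum_{w \in W_i} w \cong F$. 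Lemma 7 is what guarantees that outside the NE-cycle case the center is concentrated in degree zero, so that Lemma 8 indeed applies and the two cases cover all possibilities.

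The only step requiring genuine care is the center-of-a-direct-sum argument in part (a): in the completed, non-unital topological setting one must confirm that centrality decomposes across the orthogonal family $\{I(W_i)\}$. This hinges on the orthogonality $I(W_i)I(W_j) = 0$ and on the finiteness of the sum, which holds because $\Gamma$ is finite and hence $k < \infty$. Everything else is bookkeeping, since the substantive content — the convergence and centrality of $\varphi_{W_i}$, the topological graded simplicity of each $I(W_i)$, and the structure of $Z(\widehat{L}(W_i))$ — has already been secured in Lemmas 4 through 8.
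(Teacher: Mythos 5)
Your proposal is correct and follows essentially the same route as the paper, which derives the Proposition directly from the decomposition $\widehat{L}(\Gamma)=I(W_{1})\oplus\cdots\oplus I(W_{k})$ together with Lemmas 5, 7, 8 and the Laurent-polynomial computation preceding Lemma 8. The only difference is that you spell out the (standard) center-of-an-orthogonal-direct-sum argument that the paper leaves implicit; this is a correct and welcome elaboration, not a departure.
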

\bigskip

Now we will discuss connections between the Boolean algebra of annihilator
hereditary subsets of $V$ and central idempotents of $\widehat{L}(\Gamma )$.

\bigskip

\begin{lemma}\label{lem9}
The Boolean algebra of annihilator hereditary subsets of $V$ consists of $%
2^{k}$ subsets $((W_{i_{1}}\overset{\cdot }{\cup }\cdots \overset{\cdot }{%
\cup }W_{i_{s}})^{\perp })^{\perp }$, $1\leq i_{1}<\ldots <i_{s}\leq k$.
\end{lemma}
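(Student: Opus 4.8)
The plan is to prove that the $2^k$ subsets of the form $((W_{i_1}\cup\cdots\cup W_{i_s})^{\perp})^{\perp}$, indexed by subsets $\{i_1<\cdots<i_s\}\subseteq[1,k]$, exhaust all annihilator hereditary subsets of $V$, and that they are distinct. The key structural fact I would invoke is the one cited earlier from [4]: every vertex of $V$ has a descendant in $W_1\cup\cdots\cup W_k$, the union of all minimal hereditary subsets. Combined with the characterization (Lemma 16 of [4]) that $(W^{\perp})^{\perp}$ is the largest hereditary subset every vertex of which reaches $W$, this should reduce any annihilator hereditary set to a union of $W_i$'s under the double-perp closure.

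\textbf{Exhaustiveness.} First I would take an arbitrary annihilator hereditary subset $U=(Y^{\perp})^{\perp}$ for some $Y\subseteq V$, and let $S=\{i\in[1,k]\mid W_i\subseteq U\}$. The goal is to show $U=((\bigcup_{i\in S}W_i)^{\perp})^{\perp}$. Since each minimal hereditary $W_i$ is contained in $U$ exactly when $W_i\cap U\neq\phi$ (by minimality of $W_i$ and hereditariness of $U$, the intersection is either empty or all of $W_i$), the set $S$ records precisely which minimal subsets meet $U$. Now every vertex $v\in U$ has a descendant in $W_1\cup\cdots\cup W_k$; since $U$ is hereditary that descendant lies in $U$, hence in some $W_i$ with $i\in S$. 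This shows every vertex of $U$ reaches $\bigcup_{i\in S}W_i$, so by the Lemma 16 characterization $U\subseteq((\bigcup_{i\in S}W_i)^{\perp})^{\perp}$. The reverse inclusion requires showing that every vertex reaching $\bigcup_{i\in S}W_i$ already lies in $U$; here I would use that each $W_i\subseteq U$ for $i\in S$, so $\bigcup_{i\in S}W_i\subseteq U$, whence $(\bigcup_{i\in S}W_i)^{\perp}\supseteq U^{\perp}$ and applying $\perp$ again (order-reversing) gives $((\bigcup_{i\in S}W_i)^{\perp})^{\perp}\subseteq(U^{\perp})^{\perp}=U$, the last equality because $U$ is already an annihilator hereditary set.

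\textbf{Distinctness and the count.} To get exactly $2^k$ distinct subsets I would show the map $S\mapsto((\bigcup_{i\in S}W_i)^{\perp})^{\perp}$ is injective on subsets $S\subseteq[1,k]$. The cleanest way is to recover $S$ from the set: I claim $W_i\subseteq((\bigcup_{j\in S}W_j)^{\perp})^{\perp}$ if and only if $i\in S$. The forward direction for $i\in S$ is immediate since $W_i\subseteq\bigcup_{j\in S}W_j\subseteq((\bigcup_{j\in S}W_j)^{\perp})^{\perp}$. For the converse, if $i\notin S$ then $W_i$ is a minimal hereditary set disjoint from $\bigcup_{j\in S}W_j$, and by minimality no vertex of $W_i$ can reach $\bigcup_{j\in S}W_j$ (a path from $W_i$ would land in a proper nonempty hereditary subset of $W_i$, contradicting minimality unless it stays in $W_i$, which is disjoint from the union); hence $W_i\subseteq(\bigcup_{j\in S}W_j)^{\perp}$, forcing $W_i$ out of the double-perp closure. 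This recovers $S$ and gives injectivity.

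\textbf{The main obstacle.} The delicate step is the disjointness-versus-reachability argument for distinctness, specifically ruling out that a vertex of $W_i$ (with $i\notin S$) reaches the union $\bigcup_{j\in S}W_j$. One must be careful that minimality of $W_i$ genuinely forbids descendants escaping $W_i$ into a \emph{different} minimal set: since $W_i$ is hereditary, all descendants of its vertices lie in $W_i$, so no vertex of $W_i$ reaches any vertex outside $W_i$ at all, and in particular not into a disjoint $\bigcup_{j\in S}W_j$. This is where I expect to spend the most care, making sure the interplay between hereditariness (descendants stay inside) and minimality (no proper nonempty hereditary piece) is deployed correctly so that both the exhaustiveness inclusion and the distinctness separation hold simultaneously.
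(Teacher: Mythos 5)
Your proof is correct and follows essentially the same route as the paper's: identify the minimal hereditary subsets contained in a given annihilator hereditary set, use the fact that every vertex has a descendant in $W_{1}\cup\cdots\cup W_{k}$ together with Lemma 16 of [4] for one inclusion, and the order-reversing property of $\perp$ for the other. You additionally verify that the $2^{k}$ sets are pairwise distinct (by recovering $S$ from which $W_{i}$ are contained in the double-perp), a point the paper's proof leaves implicit but which is needed to justify the count.
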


\begin{proof}
Let $W$ be a nonempty annihilator hereditary subset of $V$. Let $W_{i_{1}}$,
\ldots , $W_{i_{s}}$, $1\leq i_{1}<\ldots <i_{s}\leq k$, be all minimal
hereditary subsets contained in $W$. We claim that $W=((W_{i_{1}}\cup \cdots
\cup W_{i_{s}})^{\perp })^{\perp }$. Indeed, since an arbitrary vertex from $%
W$ has a descendant in $W_{1}\cup \cdots \cup W_{k}$ it follows that an
arbitrary vertex from $W$ has a descendant in $W_{i_{1}}\cup \cdots \cup
W_{i_{s}}$. By Lemma 16 of [4] it implies $W\subseteq ((W_{i_{1}}\cup \cdots
\cup W_{i_{s}})^{\perp })^{\perp }$.

\medskip

If $v\in V$ and there does not exist a path $p\in Path(\Gamma )$ such that $%
s(p)=v$, $r(p)\in W$ then there does not exist a path $p^{\prime }\in
Path(\Gamma )$ such that $s(p^{\prime })=v$, $r(p^{\prime })\in
W_{i_{1}}\cup \cdots \cup W_{i_{s}}$. Hence $W^{\perp }\subseteq
(W_{i_{1}}\cup \cdots \cup W_{i_{s}})^{\perp }$ and therefore ($W^{\perp
})^{\perp }\supseteq ((W_{i_{1}}\cup \cdots \cup W_{i_{s}})^{\perp })^{\perp
}$. Since $W$ is an annihilator subset we have $W=(W^{\perp })^{\perp }$, $%
W\supseteq ((W_{i_{1}}\cup \cdots \cup W_{i_{s}})^{\perp })^{\perp }$. This
finishes the proof of the lemma.
\end{proof}

\bigskip

\begin{lemma}\label{lem10}
For arbitrary hereditary subsets $W_{1}$, $W_{2}$ of $V$ we have $e(W_{1})e(W_{2})=e(W_{1}\cap W_{2})$.
\end{lemma}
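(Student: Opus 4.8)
The plan is to expand the product into a double sum over arrival paths and to show that it collapses, term by term, onto the defining sum of $e(W_1\cap W_2)$. Since multiplication in $\widehat{L}(\Gamma)$ is continuous and the sums $e(W_i)=\sum_{p\in Arr(W_i)}pp^{\ast}$ converge, I would first write
\[
e(W_1)e(W_2)=\sum_{p\in Arr(W_1)}\ \sum_{q\in Arr(W_2)}pp^{\ast}qq^{\ast}.
\]
By the continuation fact recorded in the proof of Lemma 4(b), the term $pp^{\ast}qq^{\ast}$ vanishes unless one of $p,q$ is a continuation of the other, so only such pairs survive.

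The second step is a short reduction of each surviving term. Using relations (2) and (3), if $q=pq'$ then $p^{\ast}q=r(p)q'=q'$, whence $pp^{\ast}qq^{\ast}=pq'q'^{\ast}p^{\ast}=qq^{\ast}$; symmetrically, if $p=qp'$ then $pp^{\ast}qq^{\ast}=pp^{\ast}$. Because the relation ``is a continuation of'' is a partial order, these two cases are disjoint once we reserve the case $p=q$ for the first one. Thus every surviving product equals $qq^{\ast}$ with $q\in Arr(W_2)$ admitting a prefix in $Arr(W_1)$, or $pp^{\ast}$ with $p\in Arr(W_1)$ admitting a strict prefix in $Arr(W_2)$; moreover for a fixed $q$ (resp. $p$) such a prefix, if it exists, is unique, namely the initial segment ending at the first vertex lying in $W_1$ (resp. $W_2$).

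The heart of the argument is to identify these surviving diagonal terms with $\{rr^{\ast}: r\in Arr(W_1\cap W_2)\}$. I would reformulate the arrival condition in terms of the vertex sequence $w_0=s(r),\dots,w_m=r(r)$ of a path $r$: one has $r\in Arr(W)$ exactly when $w_m\in W$ while $w_0,\dots,w_{m-1}\notin W$, i.e. the range is the first vertex of $r$ that lies in $W$. For $r\in Arr(W_1\cap W_2)$, letting $a,b$ be the positions of the first vertex of $r$ in $W_1$, resp. in $W_2$, heredity of $W_1$ and $W_2$ forces $\max(a,b)=m$: if both were $<m$ then the later of $w_a,w_b$ would already lie in $W_1\cap W_2$ before the range. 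Hence either $b=m$, so $r\in Arr(W_2)$ and $r$ meets $W_1$, matching a term $qq^{\ast}$; or $b<m$, which forces $a=m$, so $r\in Arr(W_1)$ and $r$ meets $W_2$ strictly before its range, matching a term $pp^{\ast}$. Conversely, heredity shows any such surviving $q$ (resp. $p$) itself lies in $Arr(W_1\cap W_2)$. This sets up a bijection between surviving terms and $Arr(W_1\cap W_2)$, each element occurring once, so the double sum equals $\sum_{r\in Arr(W_1\cap W_2)}rr^{\ast}=e(W_1\cap W_2)$.

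I expect the main obstacle to be precisely this bookkeeping: verifying that the two families of surviving terms are disjoint, exhaust $Arr(W_1\cap W_2)$, and count each element exactly once. The identity $\max(a,b)=m$ together with the ``first vertex in $W$'' description of $Arr$ is what makes the matching clean and rules out double counting. A minor secondary point is to justify the termwise rearrangement of the double sum in the completion, which I would deduce from continuity of multiplication and the already-established convergence of the defining sums of $e(W_1)$ and $e(W_2)$.
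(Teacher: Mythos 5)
Your proposal is correct and follows essentially the same route as the paper: expand the product over pairs of arrival paths, kill the non-nested pairs, reduce each surviving term to $qq^{\ast}$ or $pp^{\ast}$, and match the survivors bijectively with $Arr(W_1\cap W_2)$ via the ``first vertex in $W$'' bookkeeping (your two-plus-one cases are exactly the paper's $S_1$, $S_2$, $S_3$, with the diagonal case $p=q$ folded into the first). Your $\max(a,b)=m$ observation is precisely the heredity argument the paper leaves implicit, so there is nothing further to add.
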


\begin{proof}
We have $e(W_{1})e(W_{2})=S_{1}+S_{2}+S_{3}$,

$S_{1}=\sum \{p_{1}p_{1}^{\ast }p_{2}p_{2}^{\ast } | p_{i}\in Arr(W_{i}), p_{2} \text{ is a proper extension of } p_{1}\}$,

\medskip

$S_{2}=\sum \{p_{1}p_{1}^{\ast }p_{2}p_{2}^{\ast }|p_{i}\in Arr(W_{i}),  p_{1} \text{ is a proper extension of } p_{2}\}$,

\medskip

$S_{3}=\sum \{p_{1}p_{1}^{\ast }p_{2}p_{2}^{\ast } | p_{i}\in Arr(W_{i}), p_{1}=p_{2}\}$.

\medskip

It is easy to see that

\medskip

$S_{1}=\sum \{pp^{\ast } | p\in Arr(W_{1}\cap W_{2}), \text{ the path $p$ enters $W_{1}$ before it enters } W_{1}\cap W_{2}\}$;

\medskip

$S_{2}=\sum \{pp^{\ast } | p\in Arr(W_{1}\cap W_{2}), \text{ the path $p$ enters $W_{2}$ before it enters } W_{1}\cap W_{2}\}$;

\medskip

$S_{3}=\sum \{pp^{\ast } | p\in Arr(W_{1}\cap W_{2}), r(p) \text{ is the first vertex on $p$ lying in } W_{1}\cup W_{2}\}$.

\medskip

Since every path from $Arr(W_{1}\cap W_{2})$ falls in one of the categories
above, the assertion of the lemma follows.
\end{proof}

\bigskip

Recall that $B(\Gamma )$ denotes the Boolean algebra of annihilator
hereditary subsets of $V$.

\bigskip

\begin{prop}\label{pro2}
The mapping $W\rightarrow e(W)$, $W\in B(\Gamma )$, is an isomorphism
from the Boolean algebra $B(\Gamma )$ to the Boolean algebra of central
idempotents of $\widehat{L}(\Gamma )$.
\end{prop}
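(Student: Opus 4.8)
The plan is to establish that the map $W \mapsto e(W)$ is a bijective Boolean-algebra homomorphism from $B(\Gamma)$ onto $E(\widehat{L}(\Gamma))$, the Boolean algebra of central idempotents of $\widehat{L}(\Gamma)$. First I would verify that the map is a homomorphism of Boolean algebras. Compatibility with the meet operation is immediate from Lemma~\ref{lem10}, which gives $e(W_1)e(W_2) = e(W_1 \cap W_2)$, i.e.\ $e(W_1 \wedge W_2) = e(W_1) \wedge e(W_2)$. For the complementation operation I must show $e(W^\perp) = 1 - e(W)$; by Example~\ref{ex2} this is equivalent to checking $e(W)\,e(W^\perp)=0$ and $e(W)+e(W^\perp)=1$. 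The first follows from Lemma~\ref{lem10} together with $W \cap W^\perp = \phi$ and $e(\phi)=0$. For the second, I would argue that every arrival path either reaches $W$ or stays forever in $W^\perp$, so that the two families of idempotents $\{pp^\ast : p \in Arr(W)\}$ and $\{qq^\ast : q \in Arr(W^\perp)\}$ partition the identity $\sum_{v\in V} v = 1$; this uses the characterization in Lemma~16 of [4] that $(W^\perp)^\perp$ consists exactly of the vertices with a path into $W$.

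Next I would establish surjectivity. By Proposition~1 the center $Z(\widehat{L}(\Gamma)) = \bigoplus_{i=1}^k \varphi_{W_i}(Z(\widehat{L}(W_i)))$ decomposes along the minimal hereditary subsets $W_1,\dots,W_k$. An arbitrary central idempotent $f$ is an element of this center, so it decomposes as $f = \sum_{i} \varphi_{W_i}(f_i)$ with each $f_i$ a central idempotent of $\widehat{L}(W_i)$ (idempotency passes to the orthogonal summands). By Proposition~1(b) each $Z(\widehat{L}(W_i))$ is either $F$ or a Laurent polynomial algebra; in either case its only idempotents are $0$ and the identity $\sum_{w\in W_i} w$. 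Hence each $f_i \in \{0, \sum_{w \in W_i} w\}$, and therefore $f = \sum_{i \in S} \varphi_{W_i}(\sum_{w\in W_i} w)$ for some subset $S \subseteq \{1,\dots,k\}$. It then remains to identify this sum with $e(W)$ for $W = ((\bigcup_{i\in S} W_i)^\perp)^\perp$, which by Lemma~\ref{lem9} is precisely the generic annihilator hereditary subset. The identification $e(W) = \sum_{i\in S}\varphi_{W_i}(\sum_{w\in W_i}w)$ should follow by comparing arrival paths: every $p \in Arr(W)$ terminates in some minimal $W_i \subseteq W$, and grouping $Arr(W)$ according to which $W_i$ is reached recovers the diagonal sums $\varphi_{W_i}(\sum_{w\in W_i}w)$.

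For injectivity I would use Lemma~\ref{lem9}, which tells us $|B(\Gamma)| = 2^k$, matching the $2^k$ possible subsets $S$ produced above; combined with surjectivity this forces the correspondence to be a bijection. Alternatively, injectivity follows directly because distinct annihilator hereditary subsets contain distinct families of minimal subsets (again by Lemma~\ref{lem9}), and the decomposition of $e(W)$ into its $\varphi_{W_i}$-components reads off exactly which minimal subsets $W$ contains.

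I expect the main obstacle to be the surjectivity step, specifically the passage from an abstract central idempotent $f$ to the explicit form $\sum_{i\in S}\varphi_{W_i}(\sum_{w\in W_i}w)$ and its identification with $e(W)$. The subtlety is that while Proposition~1 gives the direct-sum decomposition of the center, I must verify that the idempotent condition on $f$ forces each component $f_i$ into the two-element set $\{0, \sum_{w\in W_i}w\}$ and that the resulting sum genuinely equals the convergent sum $e(W) = \sum_{p\in Arr(W)} pp^\ast$ in the completion. The convergence bookkeeping and the careful grouping of arrival paths by their terminal minimal subset are where the real care is needed; the Boolean homomorphism properties, by contrast, reduce almost entirely to Lemma~\ref{lem10} and elementary manipulations.
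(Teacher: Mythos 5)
Your proposal is correct in outline and follows essentially the same route as the paper: decompose $Z(\widehat{L}(\Gamma))$ along the minimal hereditary subsets $W_{1},\ldots ,W_{k}$, observe that each summand contributes only the trivial idempotents (the paper gets this from topological graded simplicity of $I(W_{i})$, you get it from Proposition~1(b); these are interchangeable here), match the resulting $2^{k}$ idempotents with the $2^{k}$ sets of Lemma~9, and use Lemma~10 for the meet. Two small repairs are needed. First, your justification of $e(W)+e(W^{\perp})=1$ by ``the two families of arrival paths partition the identity'' is not quite right as stated, since a path may wind around a cycle indefinitely without entering $W\cup W^{\perp}$; the clean argument is that $W\cup W^{\perp}$ contains every minimal hereditary subset, so $e(W)+e(W^{\perp})=e(W\cup W^{\perp})$ absorbs $e(W_{1})+\cdots +e(W_{k})=1$ under the product of Lemma~10 and hence equals $1$ (equivalently, use the explicit complement formula for annihilator subsets, as the paper does). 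Second, a path in $Arr(W)$ for $W=((\bigcup_{i\in S}W_{i})^{\perp})^{\perp}$ terminates at its first vertex of $W$, which need not lie in any minimal $W_{i}$, so your grouping argument for $e(W)=\sum_{i\in S}e(W_{i})$ should instead factor each $q\in Arr(\bigcup_{i\in S}W_{i})$ as $q=pq'$ with $p\in Arr(W)$ and invoke Lemma~16 of [4] (every vertex of $W$ has a path into $\bigcup_{i\in S}W_{i}$) to resum; this is exactly the citation the paper makes at that point.
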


\begin{proof}
In [4] it was shown that the elements $e(W)$ are central idempotents of $%
\widehat{L}(\Gamma )$ (see also Lemma 4(c)). If $W=((W_{i_{1}}\cup \cdots
\cup W_{i_{s}})^{\perp })^{\perp }$, $1\leq i_{1}<\ldots <i_{s}\leq k$, then
by ([4], Lemma 16) we have $e(W)=e(W_{i_{1}}\cup \cdots \cup
W_{i_{s}})=e(W_{i_{1}})+\cdots +e(W_{i_{s}})$.

\medskip

The idempotent $e(W_{i})$ is the identity of the algebra $I(W_{i})$. Since
the algebra $I(W_{i})$ is topologically graded simple it follows that $%
I(W_{i})$ does not contain proper central idempotents. Hence all central
idempotents of the algebra $\widehat{L}(\Gamma )$ are sums $%
e(W_{i_{1}})+\cdots +e(W_{i_{s}})$, $1\leq i_{1}<\ldots <i_{s}\leq k$.

\medskip

We established a bijection between $Bl(\Gamma )$ and the set of central
idempotents of $\widehat{L}(\Gamma )$. To prove that it is a Boolean
isomorphism we refer to Lemma 10 and the fact that the annihilator of $%
((W_{i_{1}}\cup \cdots \cup W_{i_{s}})^{\perp })^{\perp }$ is $%
((W_{j_{1}}\cup \cdots \cup W_{j_{r}})^{\perp })^{\perp }$, where $\{j_{1}$,
\ldots , $j_{r}\}=\{1$, \ldots , $k\}\backslash \{i_{1}$, \ldots , $i_{s}\}$%
. This finishes the proof of Proposition 2.
\end{proof}

\bigskip

\section{ Center of the Leavitt path algebra $L(\Gamma )$.}

\bigskip

By Proposition 1 we have $Z(L(\Gamma ))\subseteq \oplus \varphi
_{W_{i}}(Z(\widehat{L}(W_{i}))$. Consider an element $a=\underset{i=1}{\overset{k}{%
\sum }}\varphi _{W_{i}}(a_{i})\in L(\Gamma )$, $a_{i}\in Z(\widehat{L}(W_{i})$.

\bigskip

\begin{lemma}\label{lem11}
Suppose that $a$ is a homogeneous element of degree $d\neq 0$ and $%
a_{i}\neq 0$. Then $W_{i}=V(C_{i})$, where $C_{i}$ is an NE-cycle and $%
|Arr(W_{i})|<\infty $.
\end{lemma}

\begin{proof}
Suppose that $d\geq 1$. From Lemma 8 it follows that $W_{i}=V(C_{i})$, $%
C_{i}=e_{1}\cdots e_{n}$, $e_{i}\in E$, is an NE-cycle, $a_{i}=\alpha (e_{1}\cdots
e_{n})^{r}+(e_{2}\cdots e_{n}e_{1})^{r}+\cdots +\cdots (e_{n}e_{1}\cdots
e_{n-1})^{r}$, $0\neq \alpha\in F,$ $d=nr$.

For an arbitrary path $p\in Arr(W_{i})$ the elements $p(e_{j}\cdots
e_{n}e_{1}\cdots e_{j-1})^{r}p^{\ast }$ lie in $B(\gamma )$. Moreover, these
elements won't cancel with any of the basic elements from the decompositions
of $\varphi _{W_{j}}(a_{j})$, $j\neq i$. If the set $Arr(W_{i})$ is infinite
then $\underset{p\in Arr(W_{i})}{\sum }pa_{i}p^{\ast }$ is an infinite sum,
which implies that $a\notin L(\Gamma )$. This completes the proof.
\end{proof}

\bigskip

Without loss of generality we will assume that $W_{1}$, \ldots , $W_{s}$, $%
s\leq k$, are all finitary NE-cycles among minimal hereditary subsets.

\bigskip

\begin{corollary}\label{cor1}
$$Z(L(\Gamma ))=\left( \oplus_{i=1}^{s}\varphi_{W_{i}}(Z(W_{i})\right) \oplus \left(Z(L(\Gamma ))\cap \sum\limits_{i=s+1}^{k} Fe(W_{i})\right).$$
\end{corollary}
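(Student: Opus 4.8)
The plan is to split $Z(L(\Gamma ))$ by degree and treat the homogeneous components separately, using that $L(\Gamma )$ is $\mathbb{Z}$-graded so its center is a graded subspace: if $z=\sum_{d}z_{d}\in Z(L(\Gamma ))$ then comparing homogeneous components of $zx=xz$ for homogeneous $x$ shows each $z_{d}$ is again central. First I would record that for $i\leq s$ the subspace $\varphi _{W_{i}}(Z(W_{i}))$ already sits inside $Z(L(\Gamma ))$: since $W_{i}$ is finitary, $Arr(W_{i})$ is finite, so $\varphi _{W_{i}}(a_{i})=\sum_{p\in Arr(W_{i})}pa_{i}p^{\ast }$ is a finite sum lying in $L(\Gamma )$, and by Lemma 4(c) it is central. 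I would also note that $\varphi _{W_{i}}$ preserves the grading (each $p$ has degree $l(p)$ and $p^{\ast }$ degree $-l(p)$, so $pa_{i}p^{\ast }$ has the same degree as $a_{i}$), so the degree decomposition of $a=\sum_{i}\varphi _{W_{i}}(a_{i})$ is compatible with the decomposition of Proposition 1.

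Next I would take an arbitrary $a\in Z(L(\Gamma ))$ and, using $Z(L(\Gamma ))\subseteq \bigoplus_{i=1}^{k}\varphi _{W_{i}}(Z(\widehat{L}(W_{i})))$, write $a=\sum_{i=1}^{k}\varphi _{W_{i}}(a_{i})$ with $a_{i}\in Z(\widehat{L}(W_{i}))$. Decomposing $a=\sum_{d}a^{(d)}$ into central homogeneous pieces, the nonzero-degree part is governed by Lemma 11: for $d\neq 0$ one has $a^{(d)}=\sum_{i}\varphi _{W_{i}}(a_{i}^{(d)})$, and $a_{i}^{(d)}\neq 0$ forces $W_{i}$ to be a finitary NE-cycle, i.e.\ $i\leq s$. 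Hence $\sum_{d\neq 0}a^{(d)}\in \bigoplus_{i=1}^{s}\varphi _{W_{i}}(Z(W_{i}))$.

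For the degree-zero part I would use that the degree-$0$ component of $Z(\widehat{L}(W_{i}))$ is exactly $F\cdot \sum_{w\in W_{i}}w$ in every case: by Lemma 8 when $W_{i}$ is not an NE-cycle, and as the $\sum_{v\in V(C)}v$ summand of the Laurent-polynomial center when $W_{i}$ is an NE-cycle. Thus $a_{i}^{(0)}=\beta _{i}\sum_{w\in W_{i}}w$ and $\varphi _{W_{i}}(a_{i}^{(0)})=\beta _{i}e(W_{i})$, so $a^{(0)}=\sum_{i=1}^{k}\beta _{i}e(W_{i})$. I would then split this as $\sum_{i=1}^{s}\beta _{i}e(W_{i})+\sum_{i=s+1}^{k}\beta _{i}e(W_{i})$: the first summand lies in the degree-$0$ part of $\bigoplus_{i=1}^{s}\varphi _{W_{i}}(Z(W_{i}))$ and is central, while the second, being the difference $a^{(0)}-\sum_{i=1}^{s}\beta _{i}e(W_{i})$ of two central elements of $L(\Gamma )$, is itself in $Z(L(\Gamma ))$ and clearly lies in $\sum_{i=s+1}^{k}Fe(W_{i})$. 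Combining the two parts shows $a$ belongs to the asserted sum, giving the inclusion $\subseteq$; the reverse inclusion follows from the first paragraph and the definition of the second summand.

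Finally I would check that the sum is direct, justifying the $\oplus$: by Proposition 1 the images $\varphi _{W_{i}}(Z(\widehat{L}(W_{i})))$ are independent and each $\varphi _{W_{i}}$ is injective by Lemma 4(b), so $\bigoplus_{i=1}^{s}\varphi _{W_{i}}(Z(W_{i}))$ is a genuine direct sum and meets $\sum_{i=s+1}^{k}Fe(W_{i})\subseteq \bigoplus_{i=s+1}^{k}\varphi _{W_{i}}(Z(\widehat{L}(W_{i})))$ only in $0$. I expect the main obstacle to be the degree-zero bookkeeping: one must argue that the non-finitary remainder $\sum_{i>s}\beta _{i}e(W_{i})$ is central in $L(\Gamma )$ itself, not merely in the completion $\widehat{L}(\Gamma )$, and this is precisely what subtracting the finitary central part $\sum_{i\leq s}\beta _{i}e(W_{i})\in L(\Gamma )$ from the central element $a^{(0)}$ delivers.
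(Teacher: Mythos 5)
Your proposal is correct and follows the same route the paper intends: the corollary is stated as an immediate consequence of Lemma 11 (nonzero-degree components are supported on the finitary NE-cycles $W_{1},\ldots ,W_{s}$) together with Proposition 1 and the fact that the degree-zero part of each $Z(\widehat{L}(W_{i}))$ is $F\cdot\sum_{w\in W_{i}}w$, so the degree-zero component of a central element is $\sum_{i}\beta _{i}e(W_{i})$. Your extra bookkeeping (gradedness of the center, injectivity and degree-preservation of $\varphi _{W_{i}}$, and subtracting the finitary part to see that $\sum_{i>s}\beta _{i}e(W_{i})$ lies in $L(\Gamma )$ and is central there) is exactly the justification the paper leaves implicit.
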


Recall that for $1\leq i$, $j\leq k$ we let $i\sim j$ if there exists
a cycle $C$ in $\Gamma $ such that $C\Rightarrow W_{i}$, $C\Rightarrow W_{j}.$%
Then $\sim $ is extended to an equivalence on $[1$, $k]$ via transitivity.

If $W_{i}=V(C)$, where $C$ is an NE-cycle and $|Arr(W_{i})|$ $<\infty $ then
$\{i\}$ is an  equivalence class in its own.

\bigskip

\begin{lemma}\label{lem12}
 Let $C$ be a cycle, $a=\overset{m}{\underset{i=1}{\sum }}\alpha
_{i}p_{i}p_{i}^{\ast }$, $\alpha _{i}\in F$, $p_{i}\in Path(\Gamma )$. If $%
n\geq $ $\underset{1\leq i\leq m}{\text{max}}l(p_{i})$, then $C^{\ast
^{n}}aC^{n}=\alpha r(C)$, $\alpha \in F$.
\end{lemma}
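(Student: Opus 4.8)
The plan is to reduce to a single term by linearity and then exploit the elementary fact, recorded in the proof of Lemma 4(b), that for paths $p,q\in Path(\Gamma)$ the product $p^{\ast}q$ is nonzero precisely when one of $p,q$ is a continuation of the other. Since $a=\sum_{i=1}^{m}\alpha_{i}p_{i}p_{i}^{\ast}$, it suffices to evaluate each $C^{\ast^{n}}p_{i}p_{i}^{\ast}C^{n}$ separately and then sum.

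First I would record the length estimate. Because $C$ is a cycle, $l(C)\geq 1$, so $l(C^{n})=n\,l(C)\geq n\geq l(p_{i})$ for every $i$. Hence, whenever $C^{\ast^{n}}p_{i}\neq 0$, the continuation fact together with $l(C^{n})\geq l(p_{i})$ forces $C^{n}$ to be a continuation of $p_{i}$; that is, $C^{n}=p_{i}p_{i}'$ for some path $p_{i}'$ (with $p_{i}'$ the trivial path $r(C)$ when $p_{i}=C^{n}$). Every term for which $C^{\ast^{n}}p_{i}=0$ contributes nothing to the sum.

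For a surviving term I would compute directly from the Leavitt relations. Writing $C^{n}=p_{i}p_{i}'$, one has $p_{i}^{\ast}C^{n}=p_{i}^{\ast}p_{i}p_{i}'=r(p_{i})p_{i}'=p_{i}'$ and $C^{\ast^{n}}p_{i}=(p_{i}p_{i}')^{\ast}p_{i}=(p_{i}')^{\ast}p_{i}^{\ast}p_{i}=(p_{i}')^{\ast}$, whence $C^{\ast^{n}}p_{i}p_{i}^{\ast}C^{n}=(p_{i}')^{\ast}p_{i}'=r(p_{i}')=r(C^{n})=r(C)$. Summing over $i$ then gives $C^{\ast^{n}}aC^{n}=\alpha\,r(C)$, where $\alpha=\sum\{\alpha_{i}\mid p_{i}\text{ is a beginning of }C^{n}\}\in F$.

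The argument is essentially a bookkeeping computation and presents no serious obstacle; the single point requiring care is the direction of the continuation. One must invoke the length bound $l(C^{n})\geq l(p_{i})$ to rule out the possibility that $p_{i}$ is a proper continuation of $C^{n}$, which is exactly where the hypothesis $n\geq\max_{1\leq i\leq m}l(p_{i})$ enters. Without it the products $C^{\ast^{n}}p_{i}$ could retain a leftover initial segment of $p_{i}$, and the collapse to a scalar multiple of $r(C)$ would fail.
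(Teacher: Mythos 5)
Your proof is correct and follows essentially the same route as the paper's: both reduce to a single term $C^{\ast^{n}}p_{i}p_{i}^{\ast}C^{n}$, use the continuation criterion for $p^{\ast}q\neq 0$ together with the length bound $l(C^{n})\geq n\geq l(p_{i})$ to write $C^{n}=p_{i}p_{i}'$, and then collapse the product to $(p_{i}')^{\ast}p_{i}'=r(p_{i}')=r(C)$. Your remarks on where the hypothesis $n\geq\max_{i}l(p_{i})$ is used and on the value of $\alpha$ are accurate refinements of the same argument.
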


\begin{proof}
Let $p\in Path(\Gamma )$, $n\geq l(p)$. Then $C^{\ast ^{n}}pp^{\ast
}C^{n}\neq 0$ if and only if $C^{n}$ is a continuation of the path $p$, $%
C^{n}=pp_{1}$, $p_{1}\in Path(\Gamma )$. Then $C^{\ast ^{n}}pp^{\ast
}C^{n}=p_{1}^{\ast }p^{\ast }pp^{\ast }pp_{1}=p_{1}^{\ast
}p_{1}=r(p_{1})=r(C)$, which completes the proof.
\end{proof}

\bigskip

We call a path acyclic if it does not contain any cycles.

Let $W$ be an arbitrary nonempty hereditary subset of $V$. Let $Arr(W)_{0}$
be the set of all acyclic paths from $Arr(W)$. All vertices from $W$ lie in $%
Arr(W)_{0}$.

\medskip

Let $p$ be an acyclic path such that $r(p)\notin W$ and let $C$ be a cycle,
such that $s(C)=r(C)=r(p)$. Consider the elements

\medskip

$a_{p,C}(W)=\sum \{qq^{\ast }|$ $q\in Arr(r(p)$, $W)$, $q$ is not a
continuation of $C\}$, and $b_{p,C}(W)=\underset{i\geq 1}{\sum }%
pC^{i}a_{p,C}C^{\ast ^{i}}p^{\ast }$. Remark that both sums are subsums of $%
\underset{q\in Arr(W)}{\sum }qq^{\ast }$. Hence they converge in $\widehat{L}%
(\Gamma )$ (see [4]).

\bigskip

Let $p=e_{1}\cdots e_{n}$ be an arrival path in $W$ of length $n\geq 1$ that
is not acyclic. It means that the vertices $v_{1}=s(e_{1})$, \ldots , $%
v_{n}=s(e_{n})$, $v_{n+1}=r(e_{n})$ are not all distinct.

Let $v_{i}=v_{j}$, $1\leq i<j\leq n+1$ and $j$ is minimal with this
property. Then the path $p_{1}=e_{1}\cdots e_{i-1}$ is acyclic, $%
r(e_{i-1})\notin W$. If $i=1$ then we let $p_{1}=v_{1}$. The path $%
C=e_{i}\cdots e_{j-1}$ is a first  cycle that occurs on the path $p.$. Let $k\geq 1$ be a maximal integer such
that $p$ is a continuation of $p_{1}C^{k}$. Then $p=p_{1}C^{k}p_{2}$, $%
p_{2}\in Arr(r(p)$, $W)$, $p_{2}$ is not a continuation of $C$. We showed
that

\medskip

$e(W)=\sum\limits_{\scriptsize q\in Arr(W)_{0}}qq^* +\sum\limits_{(p,C)} b_{p,C}(W)$.

\bigskip

\begin{lemma}\label{lem13}
\begin{itemize}
 \item[(a)] If $(p, C)\neq (p', C' )$, where $p, p'$ are acyclic paths; $C, C' $ are cycles,
$r(p), r(p')\notin W$, then $C^*p^*b_{p',C'}(W)pC=0$;
\item[(b)] if $q\in Arr(W)_{0}$ then $C^*p^*qq^*pC=0$;
\item[(c)] if $e(W)\in L(\Gamma )$ then for a sufficiently large $n\geq 1$ we have ${C^*}^np^*e(W)pC^{n}=r(C)$.
\end{itemize}
\end{lemma}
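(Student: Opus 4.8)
The plan is to prove the three assertions of Lemma~\ref{lem13} by exploiting the explicit decomposition
$$e(W)=\sum_{q\in Arr(W)_0}qq^*+\sum_{(p',C')}b_{p',C'}(W)$$
established just before the statement, together with the basic multiplication rule recorded in the proof of Lemma~4(b): for paths $r$, $t$, the product $r^*t$ is nonzero only when one of $r$, $t$ is a continuation of the other. Throughout I would think of each summand as a basis element $rr^*\in B(\gamma)$ and ask, for the relevant conjugating path $pC$ (or $pC^n$), whether $pC$ can be a continuation of $r$ or $r$ a continuation of $pC$; everything reduces to combinatorics of acyclic-prefix-plus-cycle factorizations.

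**Parts (a) and (b).** For part~(a) I would take a typical summand $p'C'^iaa^*C'^{*i}p'^*$ of $b_{p',C'}(W)$ (with $a\in Arr(r(p'),W)$ not a continuation of $C'$) and compute $C^*p^*\big(p'C'^i a a^* C'^{*i}p'^*\big)pC$. Since $p$, $p'$ are \emph{acyclic} paths with $r(p),r(p')\notin W$, and $C$, $C'$ are the first cycles encountered, the pair $(p,C)$ is exactly the data that records how a path in $Arr(W)$ first becomes non-acyclic; so $p^*(p'C'^i\cdots)$ forces $p$ and $p'$ to be comparable, and the acyclicity together with $(p,C)\neq(p',C')$ makes the cycles incompatible, killing the product. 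Concretely, either $p\neq p'$ (and then $p^*p'$ or its reverse fails because two distinct acyclic paths to the same ``first cycle'' vertex cannot be continuations of one another without contradicting minimality of the cycle), or $p=p'$ but $C\neq C'$, in which case after cancelling $p^*p=r(p)$ the factor $C^*(C')^i$ vanishes because distinct cycles based at the same vertex are not mutual continuations. Part~(b) is the same mechanism in its simplest form: $q\in Arr(W)_0$ is acyclic, so $C^*p^*qq^*pC\neq 0$ would require $pC$ (which contains the cycle $C$) to be a continuation of the acyclic $q$ or vice versa; since $q$ is acyclic it cannot contain $C$, and $q\in Arr(W)$ means it terminates in $W$ while $pC$ does not, so the product is $0$.

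**Part (c).** Here I would use (a) and (b) to collapse the conjugation $C^{*n}p^*e(W)pC^n$: expanding $e(W)$ by the displayed decomposition, every cross term $q q^*$ with $q\in Arr(W)_0$ dies by~(b), and every term coming from $b_{p',C'}(W)$ with $(p',C')\neq(p,C)$ dies by~(a). Thus only the single summand $b_{p,C}(W)=\sum_{i\ge1}pC^i a_{p,C}C^{*i}p^*$ survives, and I am left to evaluate
$$C^{*n}p^*\Big(\sum_{i\ge1}pC^i a_{p,C}C^{*i}p^*\Big)pC^n.$$
Using $p^*p=r(p)=s(C)$ this reduces to $\sum_{i\ge1}C^{*(n-i)}a_{p,C}C^{(n-i)}$ (for the indices where it is nonzero), and since $a_{p,C}=\sum qq^*$ is a finite linear combination of $qq^*$ with $q\in Arr(r(p),W)$, I would apply Lemma~\ref{lem12}: for $n$ larger than all the lengths $l(q)$ occurring, the cyclic conjugation $C^{*(n-i)}\,qq^*\,C^{(n-i)}$ lands on $r(C)$, and summing the surviving scalars gives $r(C)$ (the hypothesis $e(W)\in L(\Gamma)$ guarantees $a_{p,C}$ is an \emph{honest finite} sum, so ``sufficiently large $n$'' makes sense).

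**Main obstacle.** The delicate point is part~(a): correctly organizing the case analysis so that the factorization $p=p_1C^k p_2$ used to define $b_{p,C}(W)$ matches up unambiguously with the conjugating data $(p,C)$, and verifying that for $(p,C)\neq(p',C')$ no choice of exponent $i$ and no tail $a$ can produce a surviving basis element. I expect the cleanest route is to prove the sharper combinatorial statement that the map sending a non-acyclic $q\in Arr(W)$ to its triple $(p_1,C,p_2)$ is well defined and that $pC$ determines $(p,C)$, so that two different pairs conjugate disjoint portions of the basis; once that bookkeeping is pinned down, parts~(b) and~(c) follow quickly from Lemmas~\ref{lem12} and the continuation rule.
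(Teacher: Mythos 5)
Part (b) of your proposal is correct and matches the paper. In part (a), however, the justifications you give do not hold up. The claim that ``two distinct acyclic paths to the same first-cycle vertex cannot be continuations of one another'' is the right statement only when $r(p)=r(p')$ (then $p'=pp''$ forces $p''$ to be a closed path, contradicting acyclicity); in general $r(p)\neq r(p')$ and one acyclic path can perfectly well be a proper prefix of the other. The paper instead compares the \emph{whole} paths $pC$ and $p'C'$: if one is a proper continuation of the other, the shorter one exhibits a cycle on the longer path strictly before its designated first cycle, contradicting the first-cycle factorization; if they coincide, the argument above applies. Note also that your proposed ``cleanest route'' --- that $pC$ determines $(p,C)$, so different pairs index disjoint portions of the basis --- is strictly weaker than what (a) asserts: disjointness only says the basis elements differ, whereas you need $pC$ to be \emph{incomparable} (neither a prefix of the other) with every path $p'(C')^jq$ occurring in $b_{p',C'}(W)$, which is what forces the products to vanish.

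The serious gap is part (c), where your route is circular. After parts (a) and (b) discard the other summands you are left with $C^{*n}p^*b_{p,C}(W)pC^n=\sum_{i\geq 1}C^{*n}C^{i}a_{p,C}C^{*i}C^{n}$. The terms with $i<n$ give $C^{*(n-i)}a_{p,C}C^{n-i}$, which are in fact $0$ (no $q\in Arr(r(p),W)$ can be a prefix of a power of $C$, since $V(C)\cap W=\phi$), the term $i=n$ gives $a_{p,C}$, and the terms with $i>n$ give $C^{i-n}a_{p,C}C^{*(i-n)}$, which you omit and which do not vanish. So what survives is the \emph{infinite} sum $\sum_{j\geq 0}C^{j}a_{p,C}C^{*j}$, and showing that this equals $r(C)$ is exactly the content of Lemma 14 --- which the paper proves \emph{from} part (c). Your appeal to Lemma 12 is also a misreading: it yields $\alpha\, r(C)$ with $\alpha$ possibly $0$, not that each conjugate ``lands on $r(C)$.'' The paper's actual proof of (c) does not use (a) or (b) at all: since $e(W)\in L(\Gamma)$, the element $p^*e(W)p$ is a finite linear combination of elements $rr^*$, so Lemma 12 gives $C^{*n}p^*e(W)pC^{n}=\alpha\, r(C)$ for large $n$; then \emph{centrality} of $e(W)$ gives $C^{*n}p^*e(W)pC^{n}=e(W)r(C)=\sum_{q\in Arr(r(C),W)}qq^*\neq 0$, and idempotency forces $\alpha^{2}=\alpha$, hence $\alpha=1$. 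The use of centrality and idempotency of $e(W)$ is the essential ingredient missing from your argument.
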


\begin{proof}
\begin{itemize}
\item[(a)] 
Let us show that none of the paths $pC, p'C'$ is a continuation of the other path.
Suppose that $p'C'$ is a continuation of $pC.$ If $l(p'C')>l(pC)$ then the cycle $C$ in $p'C'$ occurs before the cycle $C',$ which contradicts the property of $C'.$ Hence $p'C'=pC.$
Let $l(p')>l(p),$ $p'=pp'',$ $l(p'')\geq 1.$ Then $r(p)=r(C)=r(C')=r(p').$ Hence $p''$ is a closed path, which contains a cycle. This contradicts acyclicity of the path $p'.$ we showed that $(p,C)=(p',C'),$ which contradicts our assumption.

In view of the above, $C^*p^*p'C'=0$ and therefore $C^*p^*b_{p',C'}(W)p'C'=\sum\limits_{j\geq 1} C^*p^*p'(C')^{j} a_{p',C'} (C'^{*})^j p'^*pC=0.$

\item[(b)] We have $C^{\ast }p^{\ast }q=0$ as none of the paths $pC$, $q$ is a
continuation of the other path. Indeed, $q$ is not a continuation of $pC$
since the path $q$ is acyclic. The path $pC$ is not a continuation of the
path $q$ since $r(q)\in W$ whereas $r(pC)\notin W$;

\item[(c)] Suppose that $e(W)\in L(\Gamma )$. Then by Lemma 12 for all sufficiently
large $n$ we have $C^{\ast ^{n}}p^{\ast }e(W)pC^{n}=\alpha r(C)$. We claim
that $\alpha =1$. Indeed, since the idempotent $e(W)$ lies in the center of $%
L(\Gamma )$ it follows that $C^{\ast ^{n}}p^{\ast }e(W)pC^{n}=e(W)r(C)=%
\underset{q\in Arr(r(C)\text{, }W)}{\sum }qq^{\ast }\neq 0$. Since $\alpha
^{2}r(c)=(e(W)r(c))^{2}=\alpha r(c),$  $\alpha^2=\alpha,$ we conclude that $\alpha =1$.
\end{itemize}
\end{proof}

\bigskip

\begin{lemma}\label{lem14}
 Suppose that $e(W)\in L(\Gamma )$ and $b_{p\text{,}C}\neq 0$. Then $b_{p%
\text{,}C}(W)=pCC^{\ast }p^{\ast }$.
\end{lemma}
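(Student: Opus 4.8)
The plan is to reduce the statement to the single identity $e(W)\,r(p)=r(p)$, and then to prove that identity using the hypothesis $e(W)\in L(\Gamma )$. For the reduction, set $v=r(p)=s(C)=r(C)$ and $A=a_{p,C}(W)$. Every arrival path $q\in Arr(v,W)$ factors uniquely as $q=C^{i}q'$ with $i\geq 0$ and $q'\in Arr(v,W)$ not a continuation of $C$ (peel copies of $C$ off the front; this is legitimate since $V(C)\cap W=\phi $, because $r(C)=r(p)\notin W$ and $W$ is hereditary). Hence
\[
\sum_{i\geq 0}C^{i}AC^{*i}=\sum_{q\in Arr(v,W)}qq^{\ast }=e(W)\,v ,
\]
the last equality because $v\,e(W)\,v=\sum_{q\in Arr(v,W)}qq^{\ast }$ and $e(W)$ is central. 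Writing $B=\sum_{i\geq 1}C^{i}AC^{*i}$, continuity and $p^{\ast }p=v$ give $b_{p,C}(W)=pBp^{\ast }$, while the displayed identity yields $B=C\left(\sum_{i\geq 0}C^{i}AC^{*i}\right)C^{\ast }=C\,e(W)v\,C^{\ast }$. Thus once $e(W)v=v$ is known we get $B=CvC^{\ast }=CC^{\ast }$ and therefore $b_{p,C}(W)=pCC^{\ast }p^{\ast }$, which is the assertion of the lemma.

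The heart of the matter is to prove $e(W)v=v$, and this is where I expect the main difficulty and where the hypothesis $e(W)\in L(\Gamma )$ is genuinely used. Consider $f=v-e(W)v$. By hypothesis $e(W)\in L(\Gamma )$, so $f\in L(\Gamma )$ is a \emph{finite} combination of basis elements of $B(\gamma )$; since $e(W)$ is a central idempotent, $f$ is an idempotent; and since $e(W)$ is homogeneous of degree $0$, so is $f$. The key structural feature is that $f$ is fixed by conjugation by $C$: using $C^{\ast }vC=C^{\ast }C=v$ and the centrality of $e(W)$ one computes $C^{\ast }fC=v-e(W)v=f$, whence $C^{\ast n}fC^{n}=f$ for all $n\geq 1$. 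I would then write $f=\sum_{j}\beta _{j}p_{j}q_{j}^{\ast }$ with $p_{j}q_{j}^{\ast }\in B(\gamma )$ and, by degree $0$, $l(p_{j})=l(q_{j})$, and choose $n$ so large that $l(C^{n})$ exceeds every $l(p_{j})$. For such $n$ the term $C^{\ast n}p_{j}q_{j}^{\ast }C^{n}$ is nonzero only when both $p_{j}$ and $q_{j}$ are beginnings of $C^{n}$; since $l(p_{j})=l(q_{j})$ this forces $p_{j}=q_{j}$ to be the unique prefix of $C^{n}$ of that length, and then $C^{\ast n}p_{j}p_{j}^{\ast }C^{n}=r(C)$ exactly as in Lemma 12. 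Therefore $f=C^{\ast n}fC^{n}=\gamma \,r(C)=\gamma v$ for some scalar $\gamma \in F$.

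Finally, $f=\gamma v$ is idempotent, so $\gamma v=\gamma ^{2}v$ and $\gamma \in \{0,1\}$. If $\gamma =1$ then $f=v$, i.e.\ $e(W)v=0$ and $Arr(v,W)=\phi $; but $b_{p,C}(W)\neq 0$ forces $a_{p,C}(W)\neq 0$, so there is at least one arrival path from $v$ to $W$, a contradiction. Hence $\gamma =0$, $f=0$, and $e(W)v=v$, which by the first paragraph gives $b_{p,C}(W)=pCC^{\ast }p^{\ast }$. The step I expect to need the most care is the conjugation computation $C^{\ast n}fC^{n}=\gamma \,r(C)$: one must verify that only the diagonal prefixes of $C^{n}$ survive and, crucially, that $f$ is a finite combination of basis elements — precisely the consequence of $e(W)\in L(\Gamma )$ that makes a single uniform choice of $n$ possible.
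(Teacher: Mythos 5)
Your proof is correct, and it reaches the conclusion by a noticeably different reduction than the paper's. The core mechanism is shared: conjugate by a large power of $C$, use the finiteness coming from $e(W)\in L(\Gamma )$ to collapse a degree-zero element to a scalar multiple of $r(C)$, and pin down the scalar with an idempotent argument. Indeed your computation of $C^{\ast n}fC^{n}$ is Lemma 12 of the paper extended from sums $\sum \alpha _{i}p_{i}p_{i}^{\ast }$ to general degree-zero elements $\sum \beta _{j}p_{j}q_{j}^{\ast }$, and your target identity $e(W)r(p)=r(p)$ is, via centrality, exactly the content of Lemma 13(c), since $C^{\ast n}p^{\ast }e(W)pC^{n}=e(W)C^{\ast n}p^{\ast }pC^{n}=e(W)r(C)$. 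Where you genuinely diverge is in how you isolate $b_{p,C}(W)$: the paper expands $e(W)=\sum_{q\in Arr(W)_{0}}qq^{\ast }+\sum_{(p',C')}b_{p',C'}(W)$, invokes the orthogonality relations of Lemma 13(a),(b) to kill every term except $b_{p,C}(W)$, recovers $a_{p,C}(W)=r(C)-CC^{\ast }$ by applying $x\mapsto x-CxC^{\ast }$, and then telescopes. You bypass all of that with the local identity $\sum_{i\geq 0}C^{i}a_{p,C}(W)C^{\ast i}=v\,e(W)\,v$, obtained from the unique factorization $q=C^{i}q'$ of arrival paths based at $v=r(C)$; this makes Lemma 13(a),(b) unnecessary and turns the telescoping into the one-line observation $B=C\bigl(v e(W)v\bigr)C^{\ast }$. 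The trade-off is that the paper's route reuses machinery it needs anyway (Lemma 13 also feeds into Lemma 15), while yours is more self-contained and shorter. Two small points you rely on and should state explicitly: stripping a copy of $C$ from the front of an arrival path based at $v$ again yields an arrival path based at $v$ (this, together with $V(C)\cap W=\phi $, is what makes the factorization a bijection), and the vanishing $e(W)v=0$ forces $Arr(v,W)=\phi $ because the $qq^{\ast }$, $q\in Arr(v,W)$, are pairwise orthogonal nonzero idempotents. Both are immediate, but they carry the argument.
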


\begin{proof}
As we have seen above $e(W)=\underset{q\in Arr(W)_{0}}{\sum }qq^{\ast }+$ $%
\underset{(p^{\prime },C^{\prime })}{\sum }b_{p^{\prime }\text{,}C^{\prime
}}(W)$, where the summation runs over all pairs $(p^{\prime },$ $C^{\prime })
$ such that $b_{p^{\prime }\text{,}C^{\prime }}(W)\neq 0$. By Lemma 13 for a
sufficiently large $n\geq 1$ we have $C^{\ast ^{n}}p^{\ast }e(W)pC^{n}=r(C)$%
, $C^{\ast }p^{\ast }\underset{q\in Arr(W)_{0}}{\sum }qq^{\ast }pC=0$, $%
C^{\ast }p^{\ast }$ $\underset{(p^{\prime },C^{\prime })}{\sum }b_{p^{\prime
}\text{,}C^{\prime }}(W)pC=C^{\ast }p^{\ast }b_{p,C}pC$.

Hence, $r(C)=C^{\ast ^{n}}p^{\ast }b_{p,C}(W)pC^{n}=\underset{i\geq 0}{\sum }%
C^{i}a_{p,C}(W)C^{\ast ^{i}}$. Applying the linear transformation $%
x\rightarrow x-cxc^{\ast }$ to both sides we get $r(C)-CC^{\ast }=a_{p,C}(W)$%
.

Now, $b_{p,C}(W)=p\left( \sum\limits_{i=1}^{\infty}
C^{i}(r(C)-CC^*)C^{*^{i}}\right)p^* =pCC^{\ast }p^{\ast }$, which
completes the proof of the lemma.
\end{proof}

\bigskip

\begin{lemma}\label{lem15}
Let $W^{\prime }$, $W^{\prime \prime }$ be nonempty hereditary subsets
of $V$, $W^{\prime }\cap W^{\prime \prime }=\phi $. Suppose that there
exists a cycle $C$ such that $C\Rightarrow W^{\prime }$ and $C\Rightarrow
W^{\prime \prime }$. Then the idempotents $e(W^{\prime })$, $e(W^{\prime
\prime })$ can not both lie in $L(\Gamma )$.
\end{lemma}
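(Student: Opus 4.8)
The plan is to derive a contradiction from assuming that both $e(W')$ and $e(W'')$ lie in $L(\Gamma)$. The cycle $C$ satisfies $C \Rightarrow W'$ and $C \Rightarrow W''$, which by definition means $V(C) \cap W' = V(C) \cap W'' = \phi$ and every vertex of $V(C)$ has a descendant in $W'$ as well as in $W''$. Fix the vertex $v = r(C) = s(C)$ on the cycle. Since $v$ has a descendant in $W'$, there is a path leaving $v$ that reaches $W'$, and similarly one that reaches $W''$; in particular $v \notin W'$ and $v \notin W''$, so $v$ lies outside both hereditary sets and there is genuine arrival-path structure beyond $v$ into each of $W'$ and $W''$. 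The idea is to probe both idempotents simultaneously by conjugating with high powers of $C$ and extracting the contribution at $r(C)$.

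First I would observe that if $e(W') \in L(\Gamma)$, then by Lemma 13(c) there is an acyclic path $p$ with $r(p) = v$ (one may take $p = v$ if $v$ already sits at the base of $C$, so that $p$ is the trivial path) for which, for all sufficiently large $n$, one has ${C^*}^n p^* e(W') p C^n = r(C)$. The same statement holds for $e(W'')$ with its own threshold, and by taking $n$ large enough both hold simultaneously. The key point is that $e(W')$ and $e(W'')$ are both central idempotents, so they commute, and their product is $e(W') e(W'') = e(W' \cap W'') = e(\phi)$ by Lemma 10. Since $W' \cap W'' = \phi$ and $\mathrm{Arr}(\phi) = \phi$, we get $e(W' \cap W'') = 0$.

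Now I would bring these two facts into collision. Conjugating the product relation by ${C^*}^n(\,\cdot\,)C^n$ and using centrality to write $e(W')e(W'') = e(W'')e(W')$, I would compute ${C^*}^n e(W')e(W'') C^n$ in two ways: on one hand it equals ${C^*}^n \cdot 0 \cdot C^n = 0$; on the other hand, I want to show it equals $r(C) \neq 0$. The cleanest route is to use that $e(W')$ and $e(W'')$ are idempotents fixing $r(C)$ in the sense extracted above, namely ${C^*}^n e(W') C^n \cdot r(C)$-type reductions yield $r(C)$. Concretely, since $r(C) e(W') = {C^*}^n e(W') C^n$ collapses to $r(C)$ for large $n$ (by Lemma 13(c) applied with the trivial acyclic path at $v$, using that $v$ has a descendant in $W'$ so the relevant arrival set $\mathrm{Arr}(r(C), W')$ is nonempty), and likewise $r(C) e(W'') $ reduces to $r(C)$, I would deduce that $r(C) e(W') e(W'') = r(C) \neq 0$ after the conjugation, contradicting $e(W')e(W'') = 0$.

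The main obstacle will be handling the conjugation carefully when the two idempotents are multiplied together, since Lemma 13(c) as stated extracts $r(C)$ from a single idempotent, not from a product. The delicate step is to verify that conjugating the product $e(W')e(W'')$ by high powers of $C$ still isolates the $r(C)$ component rather than producing cancellation; I expect to resolve this by first conjugating to reduce $e(W')$ to its $r(C)$-contribution and then observing that the remaining factor $e(W'')$, restricted to $r(C)$, again reduces to $r(C)$ by a second application of the same lemma, exploiting that $r(C)$ has descendants in $W''$ as well. Once both reductions are in place, the contradiction $0 = r(C) \neq 0$ is immediate and the lemma follows.
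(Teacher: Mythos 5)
Your argument is correct, but it takes a genuinely different route from the paper, so let me compare the two. The paper works with the elements $b_{p,C}$: taking $p=v=s(C)$ (a path of length $0$), the hypotheses $C\Rightarrow W'$ and $C\Rightarrow W''$ guarantee that $b_{v,C}(W')$, $b_{v,C}(W'')$ and $b_{v,C}(W'\cup W'')$ are all nonzero, so Lemma 14 forces each of them to equal $CC^{\ast}$; since $W'$ and $W''$ are disjoint hereditary sets, $Arr(W'\cup W'')$ is the disjoint union of $Arr(W')$ and $Arr(W'')$, whence $b_{v,C}(W'\cup W'')=b_{v,C}(W')+b_{v,C}(W'')$ and $CC^{\ast}=2CC^{\ast}$, a contradiction. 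You instead play the multiplicative identity of Lemma 10, $e(W')e(W'')=e(W'\cap W'')=e(\phi)=0$, against Lemma 13(c). This is sound and arguably shorter: Lemma 13(c) with the trivial path $p=v$, combined with centrality of $e(W')$ and ${C^{\ast}}^{n}C^{n}=r(C)$, says precisely that $e(W')r(C)=r(C)$ (and you correctly note that $C\Rightarrow W'$ supplies the nonemptiness of $Arr(r(C),W')$ that the proof of 13(c) needs to pin the scalar down to $1$ rather than $0$); likewise $e(W'')r(C)=r(C)$, so $0=e(W')e(W'')r(C)=e(W')r(C)=r(C)$, which is absurd. This bypasses Lemma 14 and the $b_{p,C}$-decomposition of $e(W)$ entirely, at the price of relying on Lemma 10 for disjoint sets (where all three sums $S_{1},S_{2},S_{3}$ in its proof are empty, so the product is indeed $0$). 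The ``delicate step'' you flag in your last paragraph --- conjugating the product by high powers of $C$ --- is actually a non-issue: once each instance of 13(c) is rewritten as $e(W)r(C)=r(C)$, the contradiction is a one-line multiplication and no conjugation of the product is needed.
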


\begin{proof}
Let $v=s(C)=r(C)$. By the assumptions $Arr(v$, $W^{\prime })\neq \phi $. Let $p=v$, a path of length $0$. Then $b_{p, C}(W')\neq 0$. By
Lemma 14 $b_{p, C}(W^{\prime })=CC^{\ast }$. Similarly, $b_{p\text{, }%
C}(W^{\prime \prime })\neq 0$ and therefore $b_{p\text{, }C}(W^{\prime
\prime })=CC^{\ast }$. Finally, $b_{p\text{, }C}(W^{\prime }\cup W^{\prime
\prime })\neq 0$ and therefore $b_{p\text{, }C}(W^{\prime }\cup W^{\prime
\prime })=CC^{\ast }$. Now, $b_{p\text{, }C}(W^{\prime }\cup W^{\prime
\prime })=b_{p\text{, }C}(W^{\prime })+b_{p\text{, }C}(W^{\prime \prime })$,
$CC^{\ast }=CC^{\ast }+CC^{\ast }$, a contradiction.
\end{proof}

\bigskip

\begin{lemma}\label{lem16}
Let $\underset{i=1}{\overset{k}{\sum }}\alpha _{i}e(W_{i})\in L(\Gamma )$%
, $\alpha _{i}=0$ or $1$. If $\alpha _{i}=1$ and $i\sim j$ then $\alpha
_{j}=1$.
\end{lemma}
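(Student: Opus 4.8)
The plan is to reduce to a single direct relation and then run a localization argument in the spirit of Lemmas 14 and 15. Since $\sim$ is by definition the transitive closure of the direct relation, it suffices to prove the statement when $i$ and $j$ are directly related, i.e. when there is one cycle $C$ with $C\Rightarrow W_i$ and $C\Rightarrow W_j$; the general case then follows by propagating the value $1$ along a connecting chain $i=i_1,\ldots,i_r=j$, applying the direct case at each step. So I assume $\alpha_i=1$, fix such a cycle $C$, and put $v=s(C)=r(C)$.

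First I would repackage the hypothesis. Writing $S=\{l:\alpha_l=1\}$ and $W=\bigcup_{l\in S}W_l$, the additivity $\sum_{l\in S}e(W_l)=e(W)$ from Proposition 2 shows that $a=e(W)$ is a central idempotent lying in $L(\Gamma)$. A short argument using minimality of the $W_l$ gives $v\notin W$: if $v\in W_l$ then, since $C\Rightarrow W_i$ forces $v$ to have a descendant in $W_i$ and $W_l$ is hereditary, $W_l$ would meet $W_i$ and hence equal it, contradicting $v\notin W_i$. Because $C\Rightarrow W_i\subseteq W$, a shortest path from $v$ into $W$ lies in $Arr(v,W)$ and is not a continuation of $C$, so $a_{v,C}(W)\neq 0$ and $b_{v,C}(W)\neq 0$. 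Lemma 14, applied with $p=v$, then yields $b_{v,C}(W)=CC^{\ast}$, equivalently the rigid identity $a_{v,C}(W)=v-CC^{\ast}$.

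Now suppose, for contradiction, that $\alpha_j=0$, so that $W_j\cap W=\phi$. The crucial geometric observation is that every path from $v$ into $W_j$ avoids $W$ entirely: upon entering any minimal set $W_l\subseteq W$ a path is trapped there by heredity and can never reach the disjoint set $W_j$. Let $r$ be a shortest path from $v$ to $W_j$; then $r\in Arr(v,W_j)$ has length $\geq 1$, meets no vertex of $W$, and is not a continuation of $C$ (a shortest such path does not revisit $v$, whereas a continuation of $C$ would). From these facts I read off the orthogonality relations $r^{\ast}q=0$ for every $q\in Arr(v,W)$ (neither path continues the other, since one stays out of $W$ and the other out of $W_j$) and $r^{\ast}C=0$. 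Hence $rr^{\ast}$ is a nonzero idempotent with $rr^{\ast}(v-CC^{\ast})=rr^{\ast}$ and $rr^{\ast}a_{v,C}(W)=0$. Combined with $a_{v,C}(W)=v-CC^{\ast}$ this forces $rr^{\ast}=rr^{\ast}a_{v,C}(W)=0$, a contradiction, so $\alpha_j=1$.

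The main obstacle is the last step's orthogonality bookkeeping, resting on the claim that no path from $v$ to $W_j$ passes through $W$; once that is in hand, the rest is routine Leavitt-relation manipulation. Everything else reduces to the transitivity reduction and one invocation of Lemma 14. The conceptual heart is that the idempotent $rr^{\ast}$, which \emph{witnesses} the reachability of $W_j$ through $C$, provably sits below $v-CC^{\ast}$ yet is annihilated by every arrival term for $W$; this is precisely what is incompatible with the exact value $a_{v,C}(W)=v-CC^{\ast}$ that membership of $e(W)$ in $L(\Gamma)$ imposes.
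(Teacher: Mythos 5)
Your proof is correct, and it diverges from the paper's at the final step. The paper also reduces to a single cycle $C$ with $C\Rightarrow W_i$, $C\Rightarrow W_j$, forms $W'=\cup\{W_l\mid\alpha_l=1\}$ and $W''=\cup\{W_l\mid\alpha_l=0\}$, observes that \emph{both} $e(W')=\sum\alpha_le(W_l)$ and $e(W'')=1-e(W')$ lie in $L(\Gamma)$, and then invokes Lemma 15, whose engine is the additivity $b_{v,C}(W'\cup W'')=b_{v,C}(W')+b_{v,C}(W'')$ together with Lemma 14, yielding the numerical contradiction $CC^{\ast}=2CC^{\ast}$. You instead use only the hypothesis on $W'$: from Lemma 14 you extract the rigid identity $a_{v,C}(W')=v-CC^{\ast}$ and then contradict it with the witness idempotent $rr^{\ast}$ built from a shortest path $r$ from $v$ into $W_j$, checking $rr^{\ast}(v-CC^{\ast})=rr^{\ast}\neq 0$ while $rr^{\ast}a_{v,C}(W')=0$. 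What your route buys is that it bypasses Lemma 15 and the complementary idempotent entirely, and it makes the geometric obstruction explicit (the exit of $C$ toward the $0$-part is incompatible with the exact value of $a_{v,C}(W')$); what the paper's route buys is brevity, since Lemma 15 is already on the shelf and the double-counting is one line. Two small points you should make explicit: (i) to know that $b_{v,C}(W')$ is a legitimate nonzero subsum of $e(W')$ you need $V(C)\cap W'=\phi$, not merely $v\notin W'$ --- but your minimality argument (every vertex of $C$ has a descendant in $W_i$, so any $W_l$ meeting $V(C)$ would equal $W_i$) applies verbatim to all of $V(C)$; (ii) your orthogonality claim $r^{\ast}q=0$ need only be verified for the $q$ appearing in $a_{v,C}(W')$, and your argument in fact covers all of $Arr(v,W')$, which is fine.
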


\begin{proof}
Consider the hereditary sets $W^{\prime }=\cup \{W_{i}|$ $\alpha _{i}=1\}$, $%
W^{\prime \prime }=\cup \{W_{i}|$ $\alpha _{i}=0\}$.

Suppose that  there exists a cycle $C$ such that $C\Rightarrow W_{i}$, $%
C\Rightarrow W_{j}$. Then $C\Rightarrow W^{\prime }$, $C\Rightarrow
W^{\prime \prime }$. By our assumption $e(W^{\prime })=\underset{i=1}{%
\overset{k}{\sum }}\alpha _{i}e(W_{i})$ lies in $L(\Gamma )$. Hence $%
e(W^{\prime \prime })=1-e(W^{\prime })$ also lies in $L(\Gamma )$, which
contradicts Lemma 15.
\end{proof}

\bigskip

Recall that $U_{i}=((\underset{j\in I_{i}}{\cup }W_{j})^{\perp })^{\perp }$,
$1\leq i\leq m$, are nonintersecting finitary annihilator subsets of $V$
(see \S 1, Lemma 3).

\bigskip

\begin{lemma}\label{lem17}
 An arbitrary central idempotent in $L(\Gamma )$ is of the type $%
e(U_{i_{1}})+\cdots +e(U_{i_{r}})$, $1\leq i_{1}<\ldots <i_{r}\leq m$.
\end{lemma}

\begin{proof}
A central idempotent $e\in L(\Gamma )$ lies in the center of $\widehat{L}%
(\Gamma )$. By Proposition 1 we have $e=\underset{i=1}{\overset{k}{\sum }}%
\alpha _{i}e(W_{i})$, $\alpha _{i}=0$ or $1$. By Lemma 16 $e=e\left(
\underset{j\in I_{i_{1}}}{\cup }W_{j}\right) +\cdots +e\left( \underset{j\in
I_{i_{r}}}{\cup }W_{j}\right) $, $1\leq i_{1}<\ldots <i_{r}\leq m$. Now it
remains to recall that $e\left( \underset{j\in I_{i}}{\cup }W_{j}\right)
=e(U_{i})$.
\end{proof}

\bigskip

\begin{lemma}\label{lem18}
 An arbitrary finitary annihilator hereditary subset of $V$ is of the
type $U_{i_{1}}\cup \cdots U_{i_{r}}$, $1\leq i_{1}<\ldots <i_{r}\leq m$.
\end{lemma}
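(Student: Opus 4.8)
The plan is to transport the enumeration of central idempotents from Lemma 17 back to the side of subsets, using that $W\mapsto e(W)$ is injective on annihilator hereditary subsets (Proposition 2). First I would record that a finitary annihilator hereditary $W$ produces a central idempotent of $L(\Gamma)$ itself: since $|Arr(W)|<\infty$, the defining sum $e(W)=\sum_{p\in Arr(W)}pp^{\ast}$ is finite, so $e(W)\in L(\Gamma)$, and it is central in $\widehat{L}(\Gamma)$ by [4] (see also Lemma 4(c)), hence central in $L(\Gamma)$.

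Next, by Lemma 9 I would write $W=((\bigcup_{j\in J_{0}}W_{j})^{\perp})^{\perp}$, where $J_{0}\subseteq[1,k]$ indexes the minimal hereditary subsets contained in $W$; by [4, Lemma 16] (exactly as in the proof of Proposition 2) this gives $e(W)=\sum_{j\in J_{0}}e(W_{j})$. Thus $\sum_{j\in J_{0}}e(W_{j})\in L(\Gamma)$, so Lemma 16 applies: whenever $a\in J_{0}$ and $a\sim b$ we get $b\in J_{0}$. Therefore $J_{0}$ is a union of equivalence classes, $J_{0}=I_{i_{1}}\cup\cdots\cup I_{i_{r}}$, and $W=((\bigcup_{j\in I_{i_{1}}\cup\cdots\cup I_{i_{r}}}W_{j})^{\perp})^{\perp}$.

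It remains to rewrite this double annihilator in terms of the $U_{i_{l}}$. For each $l$ we have $U_{i_{l}}=(\bigcup_{j\in I_{i_{l}}}W_{j})^{\perp\perp}$, and by [4, Lemma 16] a set and its double annihilator share the same annihilator, so $(\bigcup_{j\in I_{i_{l}}}W_{j})^{\perp}=U_{i_{l}}^{\perp}$. Since $(\bigcup_{l}X_{l})^{\perp}=\bigcap_{l}X_{l}^{\perp}$ for any family (a vertex avoids $\bigcup_{l}X_{l}$ iff it avoids every $X_{l}$), applying this with $X_{l}=\bigcup_{j\in I_{i_{l}}}W_{j}$ and then with $X_{l}=U_{i_{l}}$ yields $(\bigcup_{j\in J_{0}}W_{j})^{\perp}=(\bigcup_{l}U_{i_{l}})^{\perp}$, whence $W=((U_{i_{1}}\cup\cdots\cup U_{i_{r}})^{\perp})^{\perp}$. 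This is the precise meaning of the displayed union: it is the join of $U_{i_{1}},\ldots,U_{i_{r}}$ in $B(\Gamma)$, which coincides with the set-theoretic union exactly when the latter is already annihilator hereditary. The only obstruction to literal equality comes from vertices that reach minimal subsets in several distinct classes (for instance sources lying on no cycle), and Lemma 3(b) guarantees that the $U_{i_{l}}$ themselves remain pairwise disjoint in any case.

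The step carrying the real content, and the main obstacle, is the passage from $\sum_{j\in J_{0}}e(W_{j})\in L(\Gamma)$ to the $\sim$-closedness of $J_{0}$, i.e. the invocation of Lemma 16; everything else is bookkeeping of annihilators via [4, Lemma 16] together with the injectivity of $W\mapsto e(W)$ from Proposition 2. To close the loop with Theorem 1, I would finally check via Lemma 2, which says the finitary annihilator hereditary subsets form a Boolean subalgebra, that every join $((U_{i_{1}}\cup\cdots\cup U_{i_{r}})^{\perp})^{\perp}$ is again finitary; since each $U_{i_{l}}$ is finitary by Lemma 3(a), this confirms that the correspondence lands exactly in the finitary annihilator hereditary subsets.
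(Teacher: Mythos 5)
Your proof is correct and follows essentially the same route as the paper's: Lemma 9 to write $W=((W_{i_1}\cup\cdots\cup W_{i_s})^{\perp})^{\perp}$, then Lemma 16 (applicable because finitariness makes $e(W)=\sum_{p\in Arr(W)}pp^{\ast}$ a finite sum lying in $L(\Gamma)$ and equal to $\sum_j e(W_j)$) to see that the index set is a union of equivalence classes; you merely spell out the annihilator bookkeeping that the paper compresses into ``this implies the assertion.'' Your closing caveat is a legitimate catch: the set-theoretic union $U_{i_1}\cup\cdots\cup U_{i_r}$ need not itself be annihilator hereditary (e.g.\ a source feeding two sinks gives $V\neq U_1\cup U_2$ while $V=((U_1\cup U_2)^{\perp})^{\perp}$ is finitary annihilator hereditary), so the statement must indeed be read as the join $((U_{i_1}\cup\cdots\cup U_{i_r})^{\perp})^{\perp}$ in $B(\Gamma)$, exactly as you argue.
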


\begin{proof}
By Lemma 9 an arbitrary annihilator hereditary subset $W$ of $V$ is of the
type

$W=((W_{i_{1}}\cup \cdots \cup W_{i_{s}})^{\perp })^{\perp }$, $1\leq
i_{1}<\ldots <i_{s}\leq k$.

By Lemma 16 the set $\{i_{1}$, \ldots , $i_{s}\}$ is a union of equivalence
classes. This implies the assertion of the lemma.
\end{proof}

\bigskip

\begin{proof}[Proof of Theorem 1.]

Lemmas 17, 18 establish a $1-1$ correspondence between finitary annihilator
hereditary subsets of $V$ and central idempotents of $L(\Gamma )$. Lemma 10
shows that this correspondence preserves the operations.
\end{proof}

\bigskip

\begin{proof}[Proof of Theorem 2.]

\medskip

Let $W_{1}$, \ldots , $W_{s}$, $s\leq k$, be all finitary NE-cycles of $%
\Gamma $. Then $U_{i}=(W_{i}^{\perp })^{\perp }$ is the saturation of the
set $W_{i}$, that is, the minimal hereditary saturated subset of $V$
containing $W_{i}$, and $\varphi _{W_{i}}(Z(W_{i}))=\varphi
_{U_{i}}(Z(U_{i}))\cong F[t^{-1}$, $t]$.

\medskip

By the Corollary of Lemma 11 and Lemma 17 the center $Z(L(\Gamma ))$ is a
direct sum of $\varphi _{U_{i}}(Z(L(U_{i}))$, $1\leq i\leq s$, and of $1$%
-dimensional ideals $Fe(U_{i})$, where $U_{i}$ correspond to equivalence
classes in $[s+1$, $k]$. This completes the proof of the theorem.
\end{proof}
\bigskip

\section{ Infinite Graphs}
In this section we no longer assume that the row finite graph $\Gamma$ is finite.

Let $z$ be a nonzero element from the center $Z(L(\Gamma)).$
Consider the subset $W=\{v\in V\mid zv=0\}.$
It is easy to see that the subset $W$ is hereditary and saturated. Clearly, $V\neq W$ since otherwise $z^2=0$ contradicting semiprimeness of the algebra $L(\Gamma)$ ( see [1] ).
For distinct vertices $v_1,v_2\in V$ we have $v_1zv_2=zv_1v_2=0.$ Hence $z=\sum\limits_{v\in V\setminus W} zv.$ This implies that $|V\setminus W|<\infty.$

As above, for a cycle $C=e_1\ldots e_n,\, e\in E,\, s(e_1)=r(e_n),$ we denote $z(C)=e_1e_2\ldots e_n+e_2e_3\ldots e_2+\cdots +e_n e_1\ldots e_{n-1}.$
Since the center is a graded subspace of $L(\Gamma)$ we assume the element $z$ to be homogeneous.

If $\deg(z)=d\geq 1$ then by Theorem 2 there exist disjoint finitary cycles $C_1,\ldots, C_r$ in $V\setminus W$ of length $n_1,\ldots, n_r$ respectively, such that
$$z=\sum\limits_{i=1}^r \alpha_i \left(\sum\limits_{p\in Arr(C_i)} pz(C_i)^{d/n_i}p^*\right) mod\, I(W), \, 0\neq  \alpha_i \in F.$$

If $\deg(z)=-d,\, d\geq 1, $ then $$z=\sum\limits_{i=1}^r \alpha_i \left(\sum\limits_{p\in Arr(C_i)} p(z(C_i)^*)^{d/n_i}p^*\right) mod\, I(W), \, 0\neq  \alpha_i \in F.$$

If $\deg(z)=0$ then there exist disjoint finitary hereditary (in $\Gamma/ W$) subsets $U_1,\ldots, U_r\subseteq V\setminus W$ such that

$$z=\sum\limits_{i=1}^r \alpha_i \left(\sum\limits_{p\in Arr(U_i)} pp^*\right) mod\, I(W), \, 0\neq  \alpha_i \in F.$$

We say that a vertex is cyclic if it lies on a closed path of the graph.

\begin{lemma}\label{lem19}
 Let $v\in V\setminus W$ be cyclic vertex that lies in $\cup_{i=1}^r V(C_i)$ (if $\deg(z)\neq0$) or in $U_1\dot{\cup }\cdots \dot{\cup} U_r$ ($\deg(z)=0).$
 Then $v\in W^\perp.$
\end{lemma}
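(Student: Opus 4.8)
The plan is to show that the cyclic vertex $v$ lies in $W^{\perp}$, i.e.\ that there is no path from $v$ to $W$. I would argue by contradiction: suppose $v \notin W^{\perp}$, so there exists a path from $v$ to some vertex of $W$. Since $v$ is a cyclic vertex lying on one of the chosen cycles $C_i$ (when $\deg(z)\neq 0$) or in one of the sets $U_i$ (when $\deg(z)=0$), I can combine the cycle through $v$ with this path to $W$ to produce basis elements that cannot cancel, forcing a contradiction with the convergence/finiteness of the expression for $z$.

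\textbf{The degree $d \geq 1$ case.} First I would handle $\deg(z) = d \geq 1$. Here $v \in V(C_i)$ for some $i$, and the summand $\sum_{p \in Arr(C_i)} p\,z(C_i)^{d/n_i}\,p^{*}$ contributes to $z$. Let $C$ be the cycle $C_i$ through $v$ (reindexed so $s(C)=v$). If there were a path $g$ from $v$ to $W$, I would left-multiply $z$ by a suitable power of the cyclic word and right-multiply by $g^{*}$ (or an analogous sandwich), extracting a term that survives. The key point is that $zv = z$ on the relevant component but $z$ annihilates $W$ through the $I(W)$ reduction; using $zv' = 0$ for $v' \in W$ together with $v \rightsquigarrow W$ should force the coefficient $\alpha_i$ to vanish, contradicting $\alpha_i \neq 0$. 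Concretely, I would use that the basis elements $p\,z(C)^{d/n_i}\,p^{*}$ are linearly independent in $B(\gamma)$ and that appending the path to $W$ produces a term whose image in $L(\Gamma)/I(W)$ must match, yet maps into $I(W)$, giving $0 \neq 0$.

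\textbf{The degree $0$ case and centrality.} For $\deg(z) = 0$, the vertex $v$ lies in some finitary hereditary $U_i \subseteq V \setminus W$, and $v$ is cyclic, so there is a cycle $C$ with $s(C) = r(C) = v$. The summand $\sum_{p \in Arr(U_i)} pp^{*}$ is present. The essential tool is Lemma~12 (and the sandwiching technique of Lemma~13): I would compute $C^{*^{n}} v\, z\, v\, C^{n}$ for large $n$, which by centrality equals $z \cdot r(C)$, and compare this with the effect of a path $v \rightsquigarrow W$. Since $z$ kills every vertex of $W$, any path from $v$ reaching $W$ would force the corresponding contribution to $z\cdot r(C)$ to vanish, contradicting the nonvanishing coefficient $\alpha_i$. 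The hereditariness of $U_i$ in $\Gamma/W$ is what guarantees $v$'s cyclic structure stays inside $V \setminus W$ until the argument forces the impossible path into $W$.

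\textbf{Main obstacle.} The hardest part will be bookkeeping the cancellation carefully: I must verify that when I form the path from the cyclic vertex $v$ toward $W$ and sandwich $z$ appropriately, the extracted basis element genuinely comes from the $\alpha_i$-summand and is not cancelled by contributions from the other summands $j \neq i$ or from the $I(W)$-part. This is exactly the kind of non-cancellation argument used in Lemmas~11 and~13, where one checks that a specific $B(\gamma)$-basis element appears with a nonzero coefficient. I would lean on the linear independence of $B(\gamma)$ and on the fact that the $U_i$ (resp.\ $V(C_i)$) are pairwise disjoint, so their arrival paths and cyclic words do not interfere. The symmetry between the $d \geq 1$ and $d = -d \leq -1$ cases (via the involution $*$) means the negative-degree case follows from the positive one by applying $*$, so I would only treat $d \geq 1$ and $d = 0$ explicitly.
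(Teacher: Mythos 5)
Your overall strategy coincides with the paper's: assume a path $g$ from $v$ to $W$ exists and use the closed path at $v$, the fact that $zW=(0)$, and centrality to force a contradiction. But the concrete manipulation you propose does not actually produce one. Since $r(g)\in W$ we have $zg^{\ast}=z\,r(g)\,g^{\ast}=0$, so any expression of the form $(\cdot)\,z\,g^{\ast}$ --- in particular your $C^{n}zg^{\ast}$ --- vanishes identically on both sides of the centrality identity (indeed $C^{n}g^{\ast}=0$ as well, because $r(C^{n})=v\neq r(g)$); you obtain $0=0$, not $0\neq 0$. The degree-$0$ sketch has the same problem: the sandwich $C^{\ast^{n}}zC^{n}=z\,r(C)$ never brings the path to $W$ into the computation, and the claim that "any path from $v$ reaching $W$ would force the contribution to $z\cdot r(C)$ to vanish" is not a mechanism --- $zv$ does not vanish, and the contradiction does not live there.

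The paper composes with the path $g$ itself, not with $g^{\ast}$. Write $z=\alpha_{i}\left(\sum_{q\in Arr(C_{i})}q\,z(C_{i})^{d/n_{i}}q^{\ast}\right)+\cdots+z'$ with $z'=\sum_{j}\beta_{j}p_{j}q_{j}^{\ast}\in I(W)$, $r(q_{j})\in W$, and choose a closed path $p$ at $v$ with $l(p)>\max_{j}l(q_{j})$. Since $W$ is hereditary and $p$ never enters $W$, no $q_{j}$ is an initial segment of $p$, hence $q_{j}^{\ast}p=0$ and $z'p=0$; this is precisely the resolution of the "main obstacle" you flag (interference from the $I(W)$-part and from the summands $j\neq i$) but leave unresolved. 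Then for the composite path $pg$ from $v$ into $W$ one gets $z(pg)=\alpha_{i}\,z(C_{i})^{d/n_{i}}pg$, a nonzero multiple of a single path, while $(pg)z=pg\,(r(g)z)=0$ because $r(g)\in W$. That is the contradiction with centrality. So the idea is right, but you must multiply by $g$ rather than $g^{\ast}$, and you need the "sufficiently long closed path annihilates $z'$" step to make the extraction of the $\alpha_{i}$-term legitimate.
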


\begin{proof}
 We will discuss only the case $\deg(z)=d\geq 1.$
 The cases $\deg(z)<0$ and $\deg(z)=0$ are treated similarly.
Let $z=\sum\limits_{i=1}^r \alpha_i \left(\sum\limits_{p\in Arr(C_i)} p(z(C_i)^*)^{d/n_i}p^*\right) +z' ,\, z'\in I(W),\,\deg(z)=d\geq 1.$
Let $v\in V(C_i).$ There exists a sufficiently long closed path $p,\, s(p)=r(p)=v,$ such that $z' p=0.$ Indeed, let $z'=\sum \beta_j p_jq_j^*,\, \beta_j\in F; p_j,q_j\in Path(\Gamma),
s(p_j)=r(q_j)\in V\setminus W,$ $r(p_j)=r(q_j)\in W.$  If $l(p)> \max\limits_{j} l(q_j)$ then each $q_j$ is not a beginning of the path $p,$ hence $q_j^*p=0.$
Let $p'$ be a path such that $s(p')=v,\, r(p')\in W.$ Then $pp'z=0, zpp'=\alpha_i z(C_i)^{d/n_i} pp'.$ It is easy to see that  $z(C_i)^{d/n_i} pp'$ is a nonzero path from $v$ to $r(p'),$ which contradicts centrality of $z$ and finishes the proof of the Lemma.
\end{proof}

\begin{proof}[Proof of Theorem 3.]
Let $\deg(z)=0,$ $z=\sum\limits_{i=1}^r \alpha_i \left(\sum\limits_{p\in Arr(U_i)} pp^*\right)+z',$ $0\neq  \alpha_i \in F; U_1,\cdots, U_r$ are disjoint finitary hereditary subsets in $\Gamma/ W,$ $z'\in I(W).$

For each $U_i$ let $U'_i$ be the set of all  sinks and cyclic vertices lying in $U_i$ and their descendants. By Lemma 19 each set $U'_i$ is hereditary in $\Gamma.$ It is easy to see that $U'_i$ is also finitary.
The element $z_1=\sum\limits_{i=1}^r \alpha_i e(U'_i)$ lies in the center $Z(L(\Gamma)).$ Since $z_1\in \sum\limits_{v\in V\setminus W} vL(\Gamma)v$ it follows that $z'W=(0).$
Now, $z-z_1\in I(W), (z-z_1)^2=0.$ Hence $z=z_1.$
If $\deg(z)=d\geq 1$ then $z=\sum\limits_{j}\beta_j \left(\sum\limits_{p\in Arr(C_i)} pz(C_i)^{d/n_i}p^*\right)+z'',$ $0\neq \beta_j\in F, C_j$ are disjoint NE-cycles in $\Gamma/W$ of length $n_j,$ $z''\in I(W).$
By Lemma 19 each $C_j$ does not have exits in $\Gamma.$ Hence the element $z_2= \sum\limits_{j} \beta_j \left(\sum\limits_{p\in Arr(C_j)} pz(C_j)^{d/n_j} p^*\right)$ lies in $Z(L(\Gamma)).$

Arguing as above we conclude that $z=z_2.$ The case $\deg(z)=-d, d\geq 1,$ is treated similarly. This finishes the proof of the Theorem.
\end{proof}

\section*{Acknowledgement}
This project was funded by the Deanship of Scientific Research (DSR), King Abdulaziz University, under Grant No.
(27-130-36-HiCi). The authors, therefore, acknowledge technical and financial support of KAU.

\bigskip

\end{document}